\title{Euler-Lagrange Equations for Multiobjective Calculus of Variations Problems  
via Set Optimization}
\author{D.~Visetti\footnote{Corresponding author, Free University of Bozen-Bolzano, 
  Faculty of Economics and Management, 
  \href{mailto:daniela.visetti@unibz.it}{daniela.visetti@unibz.it}.  
  The work of D. Visetti was supported by the project \emph{Verification 
  Techniques for Multicriteria Variational Problems}, Free University of Bolzano-Bozen.}
  \and F.~Heyde\footnote{Freiberg University of Mining and Technology,  
  \href{mailto:frank.heyde@math.tu-freiberg.de}{frank.heyde@math.tu-freiberg.de}}
  }
\newcommand{\cl}{{\rm cl \,}}
\newcommand{\co}{{\rm co \,}}
\newcommand{\bs}{\backslash}
\renewcommand{\P}{\ensuremath{\mathcal{P}}}
\newcommand{\F}{\ensuremath{\mathcal{F}}}
\newcommand{\G}{\ensuremath{\mathcal{G}}}
\newcommand{\NN}{{\mathbb N}}
\newcommand{\RR}{{\mathbb R}}
\newtheorem{theorem}{Theorem}[section]
\newtheorem{proposition}[theorem]{Proposition}
\newtheorem{lemma}[theorem]{Lemma}
\newtheorem{corollary}[theorem]{Corollary}
\newtheorem{remark}[theorem]{Remark}
\newtheorem{example}[theorem]{Example}
\newtheorem{definition}[theorem]{Definition}
\definecolor{color0}{gray}{.50}
\definecolor{color1}{rgb}{0,.2,.8}
\definecolor{color2}{rgb}{1,.2,0}
\definecolor{color3}{rgb}{.8,.5,1}
\begin{document}
\selectlanguage{english}

\maketitle

\begin{abstract}
  The problem of minimizing an integral functional of a vector-valued Lagrangian 
  on a set of admissible arcs with given endpoints is considered.  The problem is 
  tackled by embedding it into a set-optimization problem such that the image 
  space becomes a complete lattice. This procedure allows to define the 
  concepts of minimizer and of infimizer, as two completely different notions.  
  An infimizer is proved to contain minimizers or at least minimizing sequences 
  for the linearly scalarized problems, with the scalarizing parameters being  
  elements in the dual of the ordering cone.  In this way 
  set-valued Euler-Lagrange equations are obtained and weak solutions are defined 
  for these equations.  Following the guidelines of the classical results, under 
  suitable convexity and coercivity hypotheses an existence result for an infimizer is 
  proved.  In addition to the unconstrained problem, Euler-Lagrange equations are 
  provided for the case with isoperimetric constraints.   Finally, the results are 
  applied to the optimization of the shape of energy-saving buildings.
\end{abstract}

\textbf{Keywords:} multicriteria calculus of variations, multiobjective optimization, 
set-valued Euler-Lagrange equations, isoperimetric problems, Lagrange multipliers.

\textbf{AMS Classification:} 49J53

\section{Introduction}

The problem of minimizing a vector-valued functional on a set of admissible arcs 
with given endpoints is a multicriteria calculus of variations problem. Similar 
problems of optimal control type have been called multi-index optimization 
problems and already studied in the 1960s and 70s, see for example \cite{rc}, \cite{yl}. Solving such a  problem is usually understood as looking for a minimal element 
in the set of the images with respect to a vector order generated by a convex cone $C$. The standard 
example is when $C$ is the positive orthant, which generates the component-wise 
order. However, this point of view does not lead to satisfactory results for example 
when it comes to value functions or duality since these concepts involve infima and suprema which may not exist with respect to vector orders or, even if they do, they 
do not provide useful information. Therefore, a new approach is proposed which 
consists of two steps: first, extend the problem to one with values in a complete 
lattice of sets and secondly, apply concepts and tools from set optimization theory as surveyed in \cite{SetOptSurvey}. In particular, the set optimization solution concept is applied here which has two features: on the one hand, one looks for sets which 
generate the infimum of the set extension of the functional (called infimizers), on the other hand, one looks for minimal elements with respect to a set relation. This is a new approach: the goal of this paper are Euler-Lagrange equations characterizing infimizers.

The underlying order relation will be generated by a closed convex cone which can 
be different from the positive orthant. This vector order will then be used to construct the image spaces of the set-valued extension of the problem. This procedure and basic concepts from the theory of ordered vector spaces and the complete-lattice approach to set optimization will be recalled in the next section.

Multi-objective optimization has many applications in engineering, see, e.g., \cite{MarlerArora} and the bibliography therein. For example, it 
is employed in the design, planning and operation of chemical process industries (see \cite{Rangaiah}).  More references 
about the use of multiple objectives in engineering system design and reliability optimization can be found in \cite{KCS}: they range from a residual heat removal safety system for a boiling water reactor, to the selection of technical specifications and maintenance activities at nuclear power plants to increase reliability, availability and maintainability of safety-related equipment or to minimizing total wire length and minimizing the failure rate in the printed circuit board design.

Infimizers are sets, but it is easier to deal with points.  If one passes from the 
set-valued function $f$ to be minimized to the inf-translation 
of $f$ by $M$, the infimizer, which was a set, becomes a set formed by a single 
point: $\{0\}$.    
Using this fact and the scalarizations with respect to elements of the dual cone, 
we prove that an infimizer $M$ must contain the minimizers of the scalarized 
problems or at least their minimizing sequences (in the considered direction).  
For each direction $\zeta$, set-valued Euler-Lagrange equations are derived.  It is 
also possible to write an equation of sets, that contains the information of all the 
scalarizations corresponding to the dual cone.

Like in the well-known real-valued results, under hypotheses of coercivity and 
convexity an existence result is proved:  the set formed by all the solutions of the scalarized problems is an infimizer.  In order to obtain the existence result we 
must enlarge the set of the admissible arcs from the space $C^1$ to a 
Sobolev space.  As a consequence, a concept of weak solutions for set-valued 
Euler-Lagrange equations is proposed.

Since constrained problems are important for applications, we 
considered the isoperimetric problem.  Also here an infimizer $M$ must 
contain the minimizers of the scalarized problems or at least their minimizing 
sequences. The set-valued Euler-Lagrange equations are provided.

Finally, an application to optimization of the shape of energy-saving buildings, 
taken from \cite{Marks}, is studied.

\section{Complete Lattices of Sets: Basic Concepts}

In this section, basic concepts and constructions for the complete-lattice approach to vector and set optimization are recalled. The reader may compare 
\cite{SetOptSurvey} for more details and references.

A nonempty closed convex cone $\emptyset \neq C \subsetneq \RR^d$ generates a vector preorder on $\RR^d$ (i.e., a reflexive and transitive relation which is compatible with addition and multiplication by non-negative numbers) by means of
\[
y \leq_C z \quad \Leftrightarrow \quad z - y \in C
\]
for $y, z \in \RR^d$.  The (positive) dual of the cone $C$ is
\[
C^+ = \{\zeta \in \RR^d \mid \forall z \in C\, ,\ \zeta \cdot z \geq 0\}.
\]
The bipolar theorem yields $C = C^{++} := (C^+)^+$ under the given assumptions.

Let $\P(\RR^d)$ denote the power set of $\RR^d$. On $\P(\RR^d)$ the Minkowski 
sum of two non-empty sets $A,B$ is defined as $A+B=\{a+b\mid a\in A,\; b\in B\}$.  
It can be extended to the whole power set by $A+\emptyset = \emptyset + A= \emptyset$. Furthermore, we consider the sum
\[
A \oplus B = \mbox{cl} \, (A+B)\, ,
\]
where $\mbox{cl}\, (\cdot)$ denotes the closure with respect to the usual topology on $\RR^d$.

The two sets 
\begin{align*}
\F(\RR^d, C) & =\{A \in \P(\RR^d) \mid A = A \oplus C\},\\
\G(\RR^d, C) & = \{A \in \P(\RR^d) \mid A = \mbox{cl\,co}\,(A + C)\}
\end{align*}
comprise the closed and the closed convex subsets of $\RR^d$, respectively, which are stable under addition of $C$. Clearly, $\G(\RR^d, C) \subseteq \F(\RR^d, C)$. The pairs $(\F(\RR^d, C),\supseteq)$, $(\G(\RR^d, C), \supseteq)$ are complete lattices with the following formulas for infimum and supremum (see, for example, \cite{SetOptSurvey}) for a collection $\mathcal{A}
\subseteq \F(\RR^d, C), \G(\RR^d, C)$:
\begin{align}
\label{inf_sup_F}
\inf\mathcal A  = \mbox{cl}\bigcup_{A\in\mathcal{A}}A, \quad & \sup\mathcal A = \bigcap_{A \in \mathcal A} A, \\
\label{inf_sup_G}
\inf \mathcal A  = \mbox{cl\,co}\bigcup_{A \in \mathcal A} A, \quad & \sup \mathcal A = \bigcap_{A \in \mathcal A} A,
\end{align}
respectively. Moreover, $\F(\RR^d, C)$ as well as $\G(\RR^d, C)$ are ordered conlinear spaces in the sense that their order $\supseteq$ is compatible with the addition $\oplus$ and multiplication with non-negative scalars defined by $s A = \{s a \mid a \in A\}$ for $A \neq \emptyset$, $s \emptyset = \emptyset$ whenever $s > 0$, and $0 A = C$ for all $A \in \F(\RR^d, C)$, $\G(\RR^d, C)$. Finally, $\G(\RR^d, C)$ with these operations is a conlinear subspace of $\F(\RR^d, C)$.  For references on conlinear spaces see 
\cite{SetOptSurvey}.
  
For $\zeta \in \RR^d\backslash\{0\}$, we denote
\[
H^+(\zeta) = \{z \in \RR^d \mid \zeta \cdot z\geq 0\}\, .
\]
The $\zeta$-difference for $\zeta \in C^+\bs\{0\}$ of two sets $A, B \in \F(\RR^d,C)$ is the set 
\[
A-_\zeta B=\{z\in\RR^d\mid z+B\subseteq A\oplus H^+(\zeta)\}
\]
which can also be written as
\begin{equation}
\label{minuszeta}
A-_\zeta B=\{z\in\RR^d\mid \zeta\cdot z+\inf_{b\in B}\zeta\cdot b\geq 
  \inf_{a\in A}\zeta\cdot a\}
\end{equation}
(see Remark 2.1 in \cite{HV}).  The $\zeta$-difference is always an element of 
$\G(\RR^d,C)$ and does not change if $A$ and $B$ are replaced by their closed 
convex hulls.  It is a closed half-space or the empty set or all the space.

Let $\{ A_n\}_{n\in\NN}$ be a sequence of sets in $\F(\RR^d,C)$, we 
denote by $\lim_{n\to\infty}A_n$ the following set:
$$
\lim_{n\to\infty}A_n=\left\{ z\in\RR^d\ \vert\ \forall n\in\NN, \exists
  z_n\in A_n, \lim_{n\to\infty}z_n=z\right\}.
$$
This definition of limit coincides with the upper limit of Painlev\'e-Kuratowski 
(see \cite{Aubin_Frank}, $\mathrm{Liminf}_{n\to\infty}A_n=\left\{ z\in Z\ 
\vert\ \lim_{n\to\infty}d(z,A_n)=0\right\}$).

Let $\{ A_s\}_{s\in S}$ with $S\subseteq\RR$, $S=[s_1,s_0)$ or $S=(s_0,s_2]$, 
$s_0,s_1,s_2\in \RR$, be a family of sets in $\mathcal{F}(\RR^d,C)$. We 
denote by $\lim_{s \to s_0}A_s$ the set which satisfies that for any sequence 
$\{s_n\}_{n\in\NN}\subseteq S$ with $s_n \to s_0$ one has
$\lim_{s \to s_0}A_s = \lim_{n\to\infty}A_{s_n}$.

Let $X$ be a vector space and $f \colon X\to \F(\RR^d, C)$ be a 
set-valued function. The graph of $f$ is the set
\[
\mbox{graph}\, f=\{(x,z)\in X\times\RR^d\mid z\in f(x)\} \subseteq X\times \RR^d
\]
and its (effective) domain is the set
\[
\mbox{dom}\, f=\{ x\in X\mid f(x)\neq\emptyset\}\subseteq X\, .
\]
The function $f$ is called convex if and only if $\mbox{graph}\, f$ is a convex set.  
This is equivalent to the following inclusion:  for any $x_1, x_2 \in X$ and 
$\lambda \in(0,1)$
\[
f(\lambda x_1+(1-\lambda)x_2) \supseteq \lambda f(x_1)+(1-\lambda)f(x_2)\, .
\]
A convex function $f \colon X\to \F(\RR^d, C)$ automatically maps into 
$\G(\RR^d, C)$ , i.e., it has convex values.

For $M \subseteq X$, we set
\[
f[M]=\{f(x) \mid x \in M\} \, .
\]

A derivative concept that is most compatible with the complete lattice approach and also the solution concept introduced in the next section is as follows.

Let $\zeta \in C^+\backslash\{0\}$ and $v \in X$. The directional derivative with 
respect to $\zeta$ of $f \colon X\to \F(\RR^d, C)$ at $x_0\in X$ in direction 
$v$ is
\begin{equation}
\label{derivative}
f'_\zeta(x_0)(v)=\lim_{s\to 0^+}\frac{1}{s} \left( f(x_0+sv)-_\zeta f(x_0) \right)
\end{equation}
(see \cite{HS} and \cite{HV} for a motivation and many features including a 
discussion of the ``improper" function values $\RR^d$ and $\emptyset$).   

We can also consider the derivative of $f$ along a curve $\gamma:
[s_1,s_2]\to X$ with respect to $\zeta$ at $s_0\in[s_1,s_2]$:
\begin{equation}
\label{derivative_curve}
\left.\frac{d\ }{ds}f_\zeta(\gamma(s))\right|_{s=s_0}=\lim_{s\to s_0}\frac{1}{s-s_0}
  \left( f(\gamma(s))-_\zeta f(\gamma(s_0)) \right).
\end{equation}

In the set-valued theory, a special type of function replaces linear functionals.
Let $X$ be a topological vector space and $X'$ its dual space.  For $\xi \in X'$ 
and $\zeta \in C^+\backslash\{0\}$, the function $S_{(\xi,\zeta)} \colon X
\to \G(\RR^d,C)$ is defined by
\begin{equation}
\label{S}
S_{(\xi,\zeta)}(x) = \{z \in \RR^d \mid \zeta \cdot z \geq \xi(x)\}\, .
\end{equation}
For each $\xi \in X'$ and $\zeta \in C^+\backslash\{0\}$, $S_{(\xi,\zeta)}$ is positively homogeneous and additive, hence in particular convex (see \cite{SetOptSurvey}).

\section{Set-valued Euler-Lagrange Equation}

Let $a, b \in \RR$ be two real numbers with $a < b$, $n, d$ two positive integers 
and $L \colon [a,b] \times\RR^{2n} \to \RR^d$, where $d$ is greater than 1, be 
a function of class $C^1$.  We consider the functional
\begin{equation}
\label{J}
J(y) = \int_a^b L(t,y(t),\dot y(t))\, dt
\end{equation}
on a set $\mathcal{X}$ of admissible arcs, taking values in $\RR^n$ and with 
given endpoints:
\begin{equation}
\label{X}
\mathcal{X}= \{ y\in C^1([a,b];\RR^n)\mid y(a)=A,\ y(b)=B\}\, ,
\end{equation}
for $A, B \in \RR^n$.

In this section, we first clarify in what sense the problem of minimizing the 
functional \eqref{J} on the set $\mathcal{X}$, shall be solved. The set-valued 
extension of the problem is established by extending $L$ and $J$ to functions into 
$\F(\RR^d, C)$. The function 
$\overline L \colon [a,b] \times \RR^{2n} \to \F(\RR^d,C)$ is simply defined by
\[
\overline L(t,y,p) = L(t,y,p) + C
\]
and the extension of $J$ is $\overline J \colon \mathcal{X} \to \F(\RR^d, C) $ defined by
\[
\overline J(y) = \int_a^b \overline L(t,y(t),\dot y(t))\, dt
\]
where the integral is understood in the Aumann sense (see \cite{Aumann}), 
but one may just note that $\overline J(y) = J(y) + C$. Clearly, since $L$ and $J$ are vector-valued and $C$ is closed and convex, 
the two extensions actually map into $\G(\RR^d, C)$. However, they will be considered 
as $\G(\RR^d, C)$-valued only if convexity is assumed, 
otherwise as $\F(\RR^d, C)$-valued.  The choice influences the formula for the 
infimum as one may see from \eqref{inf_sup_F}, \eqref{inf_sup_G}.

In this paper, the following solution concept for the problem
\[
\tag{P} \text{minimize} \quad \overline J(y) \quad \text{subject to} \quad y \in 
\mathcal X
\]
is used. It is an adaption of the concept from \cite{HL} which is, for more abstract problems, due to \cite{HS}.

\begin{definition}
\label{SolConcept}
A set $M \subseteq \mathcal X$ is called an infimizer of (P) if
\begin{equation}
\label{Infimizer}
\inf \overline J[M] = \inf \overline J[\mathcal X].
\end{equation}

An element $y_\zeta \in \mathcal X$ is called a $\zeta$-solution of (P) with 
$\zeta \in C^+\backslash\{0\}$ if
\begin{equation}
\label{ZetaSol}
\zeta \cdot J(y_\zeta) = \inf\{\zeta \cdot \overline J(y) \mid y \in \mathcal X\}.
\end{equation}
An infimizer $M \subseteq \mathcal X$ of (P) which only includes $\zeta$-solutions 
with $\zeta \in C^+\backslash\{0\}$ is called a $C^+$-solution of (P).
\end{definition}

It is immediate that $M = \mathcal X$ is an infimizer, and that if $M$ is 
an infimizer of (P), then so is any other set $M' \subseteq \mathcal X$ with $M' 
\supseteq M$. Since an infimizer is a subset of $\mathcal X$, it may seem hard to 
formulate necessary conditions for a set to be an infimizer. The following concept 
reduces infimizers to singletons. It was introduced in \cite{HS}.

\begin{definition}
\label{InfTranslation}
Let $M\subseteq\mathcal{X}$ be a non-empty set of feasible arcs. The function 
$$
\widehat J(\cdot;M) \colon C^1_0([a,b]; \RR^n) \to \F(\RR^d, C)
$$
defined by
\begin{equation}
\label{inf_transl}
\widehat J(y; M) = \inf_{v \in M} \overline J(y+v)
\end{equation}
is called the inf-translation of $\overline J$ by $M$.
\end{definition}

The infimum in \eqref{inf_transl} is understood in $(\F(\RR^d, C), \supseteq)$. Note that $\widehat J(\cdot; M)$ is not convex in general even if $\overline J$ is. On the other hand, it is easy to see that a set $M$ is an infimizer for $\overline J$ if and only if $\{0\} \subset C_0^1([a,b]; \RR^n)$ is an infimizer for $\widehat J(\cdot;M)$ (see \cite{SetOptSurvey}).   In \cite{inf-transl} the link between optimization problems 
and the inf-translation is studied.

It is important to realize that even though the original problem is vector-valued, its inf-translation
\[
\tag{IP} \text{minimize} \quad\widehat J(y; M) \quad \text{subject to} \quad y \in C^1_0([a,b]; \RR^n)
\]
is a genuine set-valued problem.

\begin{remark}
  Since the boundary conditions are essentially linear constraints, the inf-translation 
  can be considered as a function on the linear space $C_0^1([a,b]; \RR^n)$ which 
  ``absorbs" the boundary conditions, since for $v \in \mathcal X$ and $y \in 
  C_0^1([a,b]; \RR^n)$ one has $y + v \in \mathcal X$.
\end{remark}

In \eqref{ZetaSol}, linear scalarizations of the vector-valued problems were used. This is extended to $\widehat J$ by considering the function $\varphi_{\zeta, M} \colon 
C^1_0([a,b]; \RR^n) \to \overline\RR:=\RR\cup\{\pm\infty\}$ defined by
\begin{equation}
\label{phi}
\varphi_{\zeta, M}(v) = \inf_{z\in\widehat J(v; M)} \zeta\cdot z
\end{equation}
with $\zeta \in C^+$.

Since the following equalities hold
$$
\begin{aligned}
\varphi_{\zeta,M}(v) & =\inf_{z\in\widehat J(v;M)} \zeta\cdot z 
    = \inf\left\{ \zeta\cdot z\mid z\in \inf_{y\in M}\overline J(y+v)= 
    \mbox{cl}\bigcup_{y\in M}\overline J(y+v)\right\} \\
  &= \inf\left\{ \zeta\cdot z\mid z\in\bigcup_{y\in M} J(y+v)\right\},
\end{aligned}
$$
one has
\begin{equation}
\label{phi2}
\varphi_{\zeta,M}(v)=\inf_{y\in M} \zeta\cdot J(y+v)\, .
\end{equation}

The relation between an infimizer of the set-valued function $\overline J$ and the 
corresponding extended real-valued function $\varphi_{\zeta,M}$ is explained in the following lemma.

\begin{lemma}
\label{lmm_infM0}
  If $M \subset \mathcal{X}$ is an infimizer of $\overline J$, then $0 \in 
  C^1_0([a,b]; \RR^n)$ is a minimizer of $\varphi_{\zeta, M}$ for every $\zeta \in 
  C^+\backslash\{0\}$.

  If $\bar J$ is convex as a function into $\F(\RR^d, C)$ and $M$ is a convex set, 
  then $M$ is an infimizer if and only if $0 \in C^1_0([a,b]; \RR^n)$ is a minimizer 
  of $\varphi_{\zeta, M}$ for every $\zeta\in C^+\backslash\{0\}$.
\end{lemma}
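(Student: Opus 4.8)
The plan is to translate the statement ``$M$ is an infimizer of $\overline J$'' into the language of the inf-translation and its linear scalarizations, and then to exploit the fact that, for convex sets, taking infima commutes appropriately with the order structure. First I would recall the equivalence already mentioned in the excerpt: $M$ is an infimizer of $\overline J$ if and only if $\{0\}$ is an infimizer of $\widehat J(\cdot; M)$, i.e.\ $\inf_{v \in C^1_0} \widehat J(v; M) = \widehat J(0; M)$, which using the definition of $\widehat J$ reads
\[
\inf_{v \in C^1_0([a,b];\RR^n)} \; \inf_{y \in M} \overline J(y+v) \;=\; \inf_{y \in M} \overline J(y)\, .
\]
For the first assertion, I would argue directly. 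If $M$ is an infimizer, then the left-hand side above equals the right-hand side. Applying $\zeta \in C^+\backslash\{0\}$ to both sides via the monotone, infimum-preserving functional $A \mapsto \inf_{z \in A} \zeta \cdot z$ (which is well defined on $\F(\RR^d,C)$ because $\zeta \in C^+$ makes $\zeta \cdot z$ bounded below on sets stable under $+C$), and using \eqref{inf_sup_F} together with the interchange of two infima, gives
\[
\inf_{v \in C^1_0} \varphi_{\zeta,M}(v) \;=\; \inf_{v \in C^1_0}\inf_{y\in M}\zeta\cdot J(y+v) \;=\; \inf_{y\in M}\zeta\cdot J(y) \;=\; \varphi_{\zeta,M}(0)\, ,
\]
where the last equality is \eqref{phi2} at $v=0$. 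Hence $0$ minimizes $\varphi_{\zeta,M}$.

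For the second assertion, the ``only if'' direction is already covered by the first part, so only ``if'' needs the convexity hypotheses. Here the key is that under convexity of $\overline J$ and of $M$, the set $\bigcup_{y\in M}\overline J(y+v)$ has a convex closure for each $v$, so that, using \eqref{inf_sup_G}, the value $\widehat J(v;M)$ lies in $\G(\RR^d,C)$ and is therefore completely determined by its support function, i.e.\ by the family of numbers $\inf_{z\in \widehat J(v;M)}\zeta\cdot z = \varphi_{\zeta,M}(v)$ as $\zeta$ ranges over $C^+\backslash\{0\}$ (this uses the bipolar theorem $C=C^{++}$ to recover membership in a $C$-stable convex closed set from the inequalities $\zeta\cdot z \ge \varphi_{\zeta,M}(v)$). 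Assuming now that $0$ minimizes every $\varphi_{\zeta,M}$, I would show $\{0\}$ is an infimizer of $\widehat J(\cdot;M)$: the inequality $\inf_v \widehat J(v;M) \supseteq \widehat J(0;M)$ is automatic, and for the reverse inclusion $\inf_v\widehat J(v;M)\subseteq \widehat J(0;M)$ I would pick $z$ in the left-hand set and show $\zeta\cdot z\ge \varphi_{\zeta,M}(0)$ for all $\zeta\in C^+\backslash\{0\}$; indeed $\zeta\cdot z \ge \inf_{w}\inf_{z'\in\widehat J(w;M)}\zeta\cdot z' = \inf_w\varphi_{\zeta,M}(w) = \varphi_{\zeta,M}(0)$ by hypothesis. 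Since $\widehat J(0;M)\in\G(\RR^d,C)$, these inequalities force $z\in\widehat J(0;M)$, completing the argument; then the cited equivalence gives that $M$ is an infimizer.

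The step I expect to be the main obstacle is the passage, in the ``if'' direction, from ``the scalarizations of $\widehat J(v;M)$ agree with those of $\widehat J(0;M)$ at $v=0$ and dominate them elsewhere'' back to the set inclusion $\inf_v \widehat J(v;M)\subseteq \widehat J(0;M)$: this is exactly where convexity is indispensable, since it guarantees $\widehat J(0;M)$ is closed and convex and $C$-stable, hence equal to the intersection of the half-spaces $\{z\mid \zeta\cdot z\ge \varphi_{\zeta,M}(0)\}$ over $\zeta\in C^+\backslash\{0\}$; without convexity, the unions appearing in \eqref{inf_sup_F} need not be convex and cannot be reconstructed from support functions, which is precisely why the first assertion is only an implication. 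One should also be a little careful about the improper values $\RR^d$ and $\emptyset$ (e.g.\ if some $\varphi_{\zeta,M}(v)=-\infty$), but these cases are handled by the conventions recalled in Section~2 and do not affect the chain of equalities and inclusions above.
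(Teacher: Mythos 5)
Your proof is correct. The first part is essentially the paper's argument: both exploit that the scalarization $A\mapsto\inf_{z\in A}\zeta\cdot z$ turns the lattice infimum $\mathrm{cl}\bigcup$ into a scalar infimum, so that the infimizer identity yields $\varphi_{\zeta,M}(v)\geq\inf_{y\in\mathcal X}J_\zeta(y)=\varphi_{\zeta,M}(0)$ for every $v$. For the converse under convexity you take a genuinely different, dual route. The paper argues by contradiction: if $M$ is not an infimizer there is a witness arc $y_*\in\mathcal X$ with $J(y_*)\notin\Gamma_M:=\inf_{y\in M}\overline J(y)$; strong separation of the point $J(y_*)$ from the closed convex set $\Gamma_M$ produces a functional $\xi$, which is forced into $C^+$ because $\Gamma_M\oplus C=\Gamma_M$, and the explicit perturbation $v=y_*-\hat y$ (with $\hat y\in M$) then satisfies $\varphi_{\xi,M}(v)<\varphi_{\xi,M}(0)$, contradicting minimality of $0$. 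You instead work on the inf-translation and prove the nontrivial inclusion $\inf_v\widehat J(v;M)\subseteq\widehat J(0;M)$ directly, by representing the nonempty closed convex $C$-stable set $\widehat J(0;M)$ as the intersection of the half-spaces $\{z\mid\zeta\cdot z\geq\varphi_{\zeta,M}(0)\}$ over $\zeta\in C^+\setminus\{0\}$ --- the same separation theorem in its support-function form, with the same use of $C$-stability to restrict the admissible normals to $C^+$. The paper's version buys a concrete direction $v$ certifying non-minimality of $0$ when $M$ fails to be an infimizer; yours avoids the contradiction and deals more transparently with the improper values $\varphi_{\zeta,M}(0)=-\infty$. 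Both arguments use the convexity of $\overline J$ and of $M$ in exactly one place, namely to guarantee that $\Gamma_M=\widehat J(0;M)$ is convex so that separation (equivalently, the support-function representation) applies.
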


\begin{proof}
  If $M\subset\mathcal{X}$ is an infimizer for $\overline J$, using (\ref{phi}), 
  we have for all $v\in C_0^1[a,b]$
  $$
  \begin{aligned}
  \varphi_{\zeta,M}(v) &= \inf\left\{\zeta\cdot z\mid z\in\widehat J(v;M)\right\} 
      = \inf\left\{\zeta\cdot z\mid z\in\inf_{y\in M}\overline J(y+v)\right\} \\
    &\geq\inf\left\{\zeta\cdot z\mid z\in\inf_{y\in\mathcal{X}}\overline J(y)\right\} 
      =\inf\left\{\zeta\cdot z\mid z\in\inf_{y\in M}\overline J(y)\right\} 
      =\varphi_{\zeta,M}(0)\, .
  \end{aligned}
  $$

  To prove the second part of the statement, let us suppose that $M$ is not an 
  infimizer. Then there exists $y_*\in\mathcal{X}$ such that 
  \[
  J(y_*)\not\in \inf_{y\in M}\overline{J}(y)=:\Gamma_M,
  \]
  since otherwise $\bigcup_{y\in \mathcal{X}}\{J(y)\} \subseteq\inf_{y\in M}
  \overline{J}(y)$ and consequently
  \[
  \inf_{y\in X} \overline{J}(y)=\cl\left(\bigcup_{y\in \mathcal{X}}\{ J(y)\}+C\right)
  \subseteq \cl\left(\inf_{y\in M}\overline{J}(y)+C\right)=\inf_{y\in M}\overline{J}(y)
  \]
  thus $M$ would be an infimizer.

  Since $\Gamma_M$ is nonempty closed and convex, $J(y_*)$ can be strongly 
  seperated form $\Gamma_M$, i.e. there is $\xi$ in $\RR^d\setminus\{ 0\}$ 
  such that 
  \begin{equation}\label{eq:sep}
  \xi\cdot J(y_*) < \inf_{z\in \Gamma_M} \xi \cdot z.
  \end{equation}
  Since $M$ is nonempty there exists $\hat{y}\in M$. For all $c\in C$, $J(\hat y)
  +c\in \Gamma_M$. We can conclude that $\xi \in C^+$, since otherwise $\xi 
  \cdot \hat c<0$ for some $\hat c\in C$ and consequently
  \[
  \inf_{z\in \Gamma_M} \xi \cdot z \le \inf_{t>0} \xi \cdot \left(J(\hat y)+t\hat c\right)
    = \xi \cdot J(\hat y)+\inf_{t>0} t\left(\xi \cdot \hat c\right) =-\infty,
  \]
  contradicting \eqref{eq:sep}.

  With $v:=y_*-\hat y \in C_0^1([a,b];\RR^n)$ we obtain from \eqref{eq:sep}:
  \[
  \varphi_{\xi,M}(v)\le \xi\cdot J(v+\hat y)=\xi\cdot J(y_*)< \inf_{z\in \Gamma_M} 
    \xi   \cdot z =\varphi_{\xi,M}(0),
  \]
  a contradiction to the assumptions, which completes the proof.
\end{proof}

To simplify the notation, we denote
$$
L_\zeta(t,y,p)=L(t,y,p)\cdot\zeta\, ,
$$
while $D_yL_\zeta(t,y,p)$, $D_pL_\zeta(t,y,p)$ are the vectors in $\RR^n$ of the 
derivatives with respect to $y_i$ and with respect to 
$p_i$ for $i=1,2,\dots,n$.   Analogously, we write 
$$
J_\zeta(y)=\zeta\cdot J(y)=\int_a^b L_\zeta(t,y,\dot y)dt\, .
$$

\begin{remark}
  If $M$ is an infimizer for $\overline J$, from the definition of infimizer,  for 
  every $\zeta\in C^+\bs\{0\}$, we have
  $$
  \inf_{z\in\inf_{y\in M}\overline J(y)}\zeta\cdot z=
  \inf_{z\in\inf_{y\in\mathcal{X}}\overline J(y)}\zeta\cdot z\, .
  $$
  As in the previous lemma, this equation can be written as
  $$
  \inf_{y\in M}J_\zeta(y)=\inf_{y\in\mathcal{X}}J_\zeta(y)\, .
  $$
  We can deduce that  for every $\zeta\in C^+\bs\{0\}$ $M$ must contain 
  either a $\zeta$-solution, i.e. an arc $y_\zeta$ such that
  \begin{equation}
  \label{yzeta}
  J_\zeta(y_\zeta)=\inf_{y\in\mathcal{X}} J_\zeta(y)
  \end{equation}
  or an infimizing sequence $\{ y_{m,\zeta}\}_{m\in\NN}$ such that
  $\lim_{m\to +\infty}J_\zeta(y_{m,\zeta})=\inf_{y\in\mathcal{X}} J_\zeta(y)$.
\end{remark}

\begin{proposition}
\label{prp-phi}
  Let $L:[a,b]\times\RR^{2n}\to\RR^d$ be of class $C^1$.  Let $M\subset\mathcal{X}$ 
  be an infimizer for $\overline J$.  Given $\zeta\in C^+\bs\{0\}$, suppose that there 
  exists a $\zeta$-solution $y_\zeta\in M$.  
  Then $\varphi_{\zeta,M}\colon C_0^1([a,b];\RR^n)\to\overline\RR$ is 
  Fr\'echet differentiable at $0$ and
  \begin{equation}
  \label{phi'zetaM0}
  \varphi_{\zeta,M}'(0)(v)=\int_a^b\left[D_y L_\zeta(t,y_\zeta,\dot y_\zeta)\cdot v+
    D_p L_\zeta(t,y_\zeta,\dot y_\zeta)\cdot\dot v \right] dt\, .
  \end{equation}
\end{proposition}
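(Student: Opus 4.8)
The plan is to read $\varphi_{\zeta,M}$ through formula~\eqref{phi2}, namely $\varphi_{\zeta,M}(v)=\inf_{y\in M}J_\zeta(y+v)$, and to trap it at $0$ between its own value $\varphi_{\zeta,M}(0)$ and the ordinary scalar functional $v\mapsto J_\zeta(y_\zeta+v)$, whose behaviour at $0$ is classical. First I would evaluate $\varphi_{\zeta,M}(0)$: since $y_\zeta\in M$ one has $\varphi_{\zeta,M}(0)=\inf_{y\in M}J_\zeta(y)\le J_\zeta(y_\zeta)$, while $M\subseteq\mathcal X$ together with the fact that $y_\zeta$ is a $\zeta$-solution gives $\inf_{y\in M}J_\zeta(y)\ge\inf_{y\in\mathcal X}J_\zeta(y)=J_\zeta(y_\zeta)$, so that $\varphi_{\zeta,M}(0)=J_\zeta(y_\zeta)=\inf_{y\in\mathcal X}J_\zeta(y)\in\RR$. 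Then, for any $v\in C^1_0([a,b];\RR^n)$, I would establish the two-sided bound $\varphi_{\zeta,M}(0)\le\varphi_{\zeta,M}(v)\le J_\zeta(y_\zeta+v)$: the upper bound is immediate from $y_\zeta\in M$ and~\eqref{phi2}, and for the lower bound note that $y+v\in\mathcal X$ for every $y\in M$, so $J_\zeta(y+v)\ge\inf_{\mathcal X}J_\zeta=\varphi_{\zeta,M}(0)$, and taking the infimum over $y\in M$ yields $\varphi_{\zeta,M}(v)\ge\varphi_{\zeta,M}(0)$ (this also follows from Lemma~\ref{lmm_infM0}, by which $0$ minimizes $\varphi_{\zeta,M}$). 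In particular $\varphi_{\zeta,M}$ is real-valued near $0$, so asking for Fr\'echet differentiability at $0$ is meaningful.

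The remaining ingredient is classical. Since $L$, hence $L_\zeta$, is of class $C^1$ and $[a,b]$ is compact, the scalar functional $J_\zeta\colon C^1([a,b];\RR^n)\to\RR$ is Fr\'echet differentiable, with derivative at $y_\zeta$
\[
DJ_\zeta(y_\zeta)(v)=\int_a^b\left[D_yL_\zeta(t,y_\zeta,\dot y_\zeta)\cdot v+D_pL_\zeta(t,y_\zeta,\dot y_\zeta)\cdot\dot v\right]dt ,\qquad v\in C^1_0([a,b];\RR^n);
\]
this is the usual first-variation estimate, obtained by a first-order Taylor expansion of $L_\zeta$ under the integral sign together with the uniform continuity of $D_yL_\zeta$ and $D_pL_\zeta$ on compact sets (for $\|v\|_{C^1}\le 1$ and $\theta\in[0,1]$ the arguments $(t,y_\zeta(t)+\theta v(t),\dot y_\zeta(t)+\theta\dot v(t))$ remain in a fixed bounded subset of $[a,b]\times\RR^{2n}$). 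Since in addition $y_\zeta$ minimizes $J_\zeta$ over the affine set $\mathcal X=y_\zeta+C^1_0([a,b];\RR^n)$, the first-order optimality condition forces $DJ_\zeta(y_\zeta)(v)=0$ for every admissible $v$, and hence $J_\zeta(y_\zeta+v)=J_\zeta(y_\zeta)+o(\|v\|_{C^1})$ as $v\to 0$.

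Feeding this into the two-sided bound gives $\varphi_{\zeta,M}(0)\le\varphi_{\zeta,M}(v)\le\varphi_{\zeta,M}(0)+o(\|v\|_{C^1})$, so $\varphi_{\zeta,M}(v)-\varphi_{\zeta,M}(0)=o(\|v\|_{C^1})$; thus $\varphi_{\zeta,M}$ is Fr\'echet differentiable at $0$ and its derivative is the linear functional $DJ_\zeta(y_\zeta)$, which is precisely the right-hand side of~\eqref{phi'zetaM0}. The only genuinely non-formal step is the classical Fr\'echet differentiability of the integral functional on $C^1$, a routine but slightly delicate estimate; the difficulty one might fear in this set-valued context --- that $\varphi_{\zeta,M}$ is an infimum over the possibly large family $M$ and so need not be smooth a priori --- evaporates because the value $\varphi_{\zeta,M}(0)$ is attained at an arc $y_\zeta$ that minimizes $J_\zeta$ over all of $\mathcal X$: the infimum is then pinned from below by its value at $0$ and from above by $J_\zeta(y_\zeta+\cdot)$, whose first variation vanishes.
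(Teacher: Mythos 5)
Your proof is correct and follows essentially the same route as the paper: both sandwich $\varphi_{\zeta,M}(v)-\varphi_{\zeta,M}(0)$ between $0$ (from the minimality of $0$, via Lemma~\ref{lmm_infM0} or your direct argument) and the classical first-order expansion of $J_\zeta(y_\zeta+\cdot)$, then conclude Fr\'echet differentiability with derivative given by \eqref{phi'zetaM0}. The only cosmetic difference is the order of steps: you invoke the first-order optimality condition for $y_\zeta$ up front to kill the first variation, whereas the paper extracts the same vanishing from the two inequalities by testing with $\pm v$ --- logically the same argument.
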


\begin{proof}
  For any $v\in C_0^1([a,b];\RR^n)$, the following inequalities hold:
  $$
  \begin{aligned}
  \varphi_{\zeta,M}(v) -\varphi_{\zeta,M}(0) &= \inf_{y\in M} \int_a^b 
      L_\zeta(t,y+v,\dot y+\dot v)dt - \int_a^b L_\zeta(t,y_\zeta,\dot y_\zeta)dt \\
  &\leq \int_a^b L_\zeta(t,y_\zeta+v,\dot y_\zeta+\dot v)dt - \int_a^b 
    L_\zeta(t,y_\zeta,\dot y_\zeta)dt\, .
  \end{aligned}
  $$
  By the Fr\'echet differentiability of the functional $J_\zeta:C^1([a,b];\RR^n)\to
  \RR$ one can conclude that
  $$
  \begin{aligned}
  J_\zeta(y_\zeta+v)- J_\zeta(y_\zeta) &=\int_a^b L_\zeta(t,y_\zeta+v,\dot y_\zeta
      +\dot v)dt - \int_a^b L_\zeta(t,y_\zeta,\dot y_\zeta)dt \\
    &= J_\zeta'(y_\zeta)(v)+o(\| v\|_{C^1})\\
    &= \int_a^b\left[D_y L_\zeta(t,y_\zeta,\dot y_\zeta)\cdot v
      +D_p L_\zeta(t,y_\zeta,\dot y_\zeta)\cdot\dot v \right] dt + 
      o(\| v\|_{C^1})
  \end{aligned}
  $$
  and we have proved that
  \begin{equation}
  \label{Frechet1}
  \varphi_{\zeta,M}(v) -\varphi_{\zeta,M}(0) \leq \int_a^b\left[D_y L_\zeta(t,
    y_\zeta,\dot y_\zeta)\cdot v+D_p L_\zeta(t,y_\zeta,\dot y_\zeta)\cdot\dot v 
    \right] dt + o(\| v\|_{C^1})\, .
  \end{equation}

  On the other hand, by Lemma~\ref{lmm_infM0}, we have that 
  $\varphi_{\zeta,M}(v) -\varphi_{\zeta,M}(0)\geq 0$. By this and (\ref{Frechet1}), 
  the following inequality holds
  $$
  \int_a^b\left[D_y L_\zeta(t,y_\zeta,\dot y_\zeta)\cdot v
      +D_p L_\zeta(t,y_\zeta,\dot y_\zeta)\cdot\dot v \right] dt + 
      o(\| v\|_{C^1})\geq 0\, .
  $$
  Using the same argument with $-v$, one obtains that
  $$
  \int_a^b\left[D_y L_\zeta(t,y_\zeta,\dot y_\zeta)\cdot v
      +D_p L_\zeta(t,y_\zeta,\dot y_\zeta)\cdot\dot v \right] dt + 
      o(\| v\|_{C^1})\leq 0
  $$
  and so
  $$
  \int_a^b\left[D_y L_\zeta(t,y_\zeta,\dot y_\zeta)\cdot v+
    D_p L_\zeta(t,y_\zeta,\dot y_\zeta)\cdot\dot v 
      \right] dt =o(\| v\|_{C^1})\, .
  $$

  Then also the other inequality holds:
  \begin{equation}
  \label{Frechet2}
  \varphi_{\zeta,M}(v) -\varphi_{\zeta,M}(0)\geq \int_a^b\left[D_y 
      L_\zeta(t,y_\zeta,\dot y_\zeta)\cdot v+D_p L_\zeta(t,y_\zeta,\dot y_\zeta)
      \cdot\dot v \right] dt + o(\| v\|_{C^1[a,b]})\, .
  \end{equation}
  Now by (\ref{Frechet1}) and (\ref{Frechet2}) the proof is complete.
\end{proof}

From the fact that $y_\zeta$ is a minimizer of the functional $J_\zeta(y)$ with 
Lagrangian $L_\zeta(t,y,p)$ and the classical theory of calculus of variations (see 
e.g. \cite{Evans} or \cite{Giaq}), in \eqref{phi'zetaM0} we obtain that 
$\varphi_{\zeta,M}'(0)(v)=0$ for any $v\in C_0^1([a,b];\RR^n)$.  Moreover, 
the following corollary holds.

\begin{corollary}
  Let $L:[a,b]\times\RR^{2n}\to\RR^d$ be of class $C^2$.  Let $M\subset
  \mathcal{X}$ 
  be an infimizer for $\overline J$.  Given $\zeta\in C^+\bs\{0\}$, suppose that 
  there exists a $\zeta$-solution $y_\zeta\in M$ of class $C^2$.  
  Then $y_\zeta$ is a solution of the Euler-Lagrange equation in the direction 
  $\zeta$
  \begin{equation}
  \label{EL_zeta}
  \frac{d}{dt}D_p L_\zeta(t,y,\dot y)=D_y L_\zeta(t,y,\dot y)\, .
  \end{equation}
\end{corollary}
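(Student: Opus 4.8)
The plan is to deduce \eqref{EL_zeta} from the weak (integral) Euler--Lagrange identity that is already available, by integrating by parts once and then invoking the fundamental lemma of the calculus of variations; the $C^2$ hypotheses on $L$ and on $y_\zeta$ are exactly what makes this last passage legitimate.

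First I would record the starting point. Since $M$ is an infimizer of $\overline J$, Lemma~\ref{lmm_infM0} gives that $0 \in C_0^1([a,b];\RR^n)$ minimizes $\varphi_{\zeta,M}$, and Proposition~\ref{prp-phi} gives that $\varphi_{\zeta,M}$ is Fr\'echet differentiable at $0$ with derivative \eqref{phi'zetaM0}; testing the first-order optimality condition with both $v$ and $-v$ therefore yields $\varphi_{\zeta,M}'(0)(v)=0$, that is,
\[
\int_a^b\left[D_y L_\zeta(t,y_\zeta,\dot y_\zeta)\cdot v + D_p L_\zeta(t,y_\zeta,\dot y_\zeta)\cdot\dot v\right] dt = 0
\]
for every $v \in C_0^1([a,b];\RR^n)$. (This is the identity already noted in the paragraph preceding the corollary; equivalently, it follows directly from the vanishing of the first variation of the scalar functional $J_\zeta$ at its minimizer $y_\zeta$.)

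Next I would integrate the second term by parts. Because $L \in C^2$ and $y_\zeta \in C^2$, the function $t \mapsto D_p L_\zeta(t,y_\zeta(t),\dot y_\zeta(t))$ is of class $C^1$, so $\frac{d}{dt} D_p L_\zeta(t,y_\zeta,\dot y_\zeta)$ exists and is continuous on $[a,b]$; moreover the boundary term $\left[D_p L_\zeta(t,y_\zeta,\dot y_\zeta)\cdot v\right]_a^b$ vanishes since $v(a)=v(b)=0$. Hence
\[
\int_a^b\left[D_y L_\zeta(t,y_\zeta,\dot y_\zeta) - \frac{d}{dt} D_p L_\zeta(t,y_\zeta,\dot y_\zeta)\right]\cdot v\, dt = 0
\]
for all $v \in C_0^1([a,b];\RR^n)$. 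Testing with variations that are nonzero in a single coordinate at a time, and using that the bracket is continuous, the fundamental lemma of the calculus of variations forces each component of the bracket to vanish identically on $[a,b]$, which is precisely \eqref{EL_zeta}.

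I do not anticipate a genuine obstacle: the analytic content — differentiability of $\varphi_{\zeta,M}$ at $0$ and optimality of $0$ — is already supplied by Proposition~\ref{prp-phi} and Lemma~\ref{lmm_infM0}, and what remains is the textbook argument (see \cite{Evans}, \cite{Giaq}). The only points deserving care are that the vanishing of the Fr\'echet derivative is obtained by testing with both $v$ and $-v$, and that the step from the weak form to the pointwise equation \eqref{EL_zeta} genuinely uses the $C^2$ regularity — with only $C^1$ data one would have to remain at the du Bois-Reymond level, integrating $D_y L_\zeta$ along the curve instead of differentiating $D_p L_\zeta$.
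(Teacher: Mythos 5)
Your proposal is correct and follows essentially the same route as the paper: the paper obtains $\varphi_{\zeta,M}'(0)(v)=0$ from Proposition~\ref{prp-phi} together with the fact that $y_\zeta$ minimizes the scalar functional $J_\zeta$, and then invokes the classical theory (integration by parts plus the fundamental lemma, exactly as you spell out) to pass from the weak identity to \eqref{EL_zeta}. Your remarks on where the $C^2$ regularity enters and on the du Bois-Reymond alternative under mere $C^1$ data are accurate.
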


\begin{remark}
\label{rmk-|zeta|=1}
  In order to find the solutions $y_\zeta$ of (\ref{EL_zeta}) it is sufficient to consider 
  only some $\zeta\in C^+\bs\{0\}$, because if $\zeta_1=k\zeta_2$, with $\zeta_1,  
  \zeta_2\in C^+\bs\{0\}$ and $k>0$, then $y_{\zeta_1}=y_{\zeta_2}$.  So it is 
  enough to choose only $\zeta$ from a base of $C^+$, that is a set  $B\subseteq 
  C^+\bs\{0\}$ such that for each element $\zeta\in C^+\bs\{0\}$ there are 
  unique $\zeta_0\in B$ and $s > 0$ with $\zeta = s\zeta_0$.  The base $B$ may 
  consist of all $\zeta\in C^+\bs\{0\}$ with 
  unitary norm or, for certain cones, of all $\zeta=(\zeta_1,\dots,\zeta_d)\in C^+\bs
  \{0\}$ such that $\zeta_1+\dots+\zeta_d=1$.
\end{remark}

From the results in Proposition~\ref{prp-phi} on the scalarizations 
$\varphi_{\zeta,M}$, there are consequences on the corresponding inf-translation:  
they are described in the following corollary, recalling that $S_{(\xi,\zeta)}:
C_0^1([a,b];\RR^n)\to\G(\RR^d,C)$ is defined in \eqref{S}.

\begin{corollary}
  Let $L:[a,b]\times\RR^{2n}\to\RR^d$ be of class $C^1$.  Let $M\subset
  \mathcal{X}$ be an infimizer for $\overline J$.  Given $\zeta\in C^+\bs
  \{0\}$, suppose that there exists a $\zeta$-solution $y_\zeta\in M$.  
  Then for any $v\in C_0^1([a,b];\RR^n)$
  \begin{equation}
  \label{setELzeta}
  \widehat J'_\zeta(0;M)(v)=S_{(\varphi'_{\zeta,M}(0),\zeta)}(v)=H^+(\zeta)\, .
  \end{equation}
\end{corollary}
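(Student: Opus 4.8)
The plan is to unwind the definitions on the left of \eqref{setELzeta} and reduce everything to the scalar function $\varphi_{\zeta,M}$, whose differentiability at $0$ is already settled by Proposition~\ref{prp-phi}. Fix $v\in C_0^1([a,b];\RR^n)$ and $s>0$. First I would compute the $\zeta$-difference appearing in the defining formula \eqref{derivative} of $\widehat J'_\zeta(0;M)(v)$: using the representation \eqref{minuszeta} of $-_\zeta$ together with \eqref{phi} --- so that $\inf_{z\in\widehat J(w;M)}\zeta\cdot z=\varphi_{\zeta,M}(w)$ for $w=0$ and $w=sv$ --- one obtains
\[
\widehat J(sv;M)-_\zeta\widehat J(0;M)=\{z\in\RR^d\mid\zeta\cdot z\geq\varphi_{\zeta,M}(sv)-\varphi_{\zeta,M}(0)\}.
\]
Both scalars are finite: $\varphi_{\zeta,M}(0)=J_\zeta(y_\zeta)\in\RR$ because $y_\zeta\in M$ is a $\zeta$-solution, and $\varphi_{\zeta,M}(0)\leq\varphi_{\zeta,M}(sv)\leq J_\zeta(y_\zeta+sv)<\infty$, the lower bound coming from Lemma~\ref{lmm_infM0}. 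Dividing this closed half-space by $s>0$, with the scalar multiplication of sets in $\F(\RR^d,C)$ recalled above, gives
\[
\frac1s\bigl(\widehat J(sv;M)-_\zeta\widehat J(0;M)\bigr)=\Bigl\{z\in\RR^d\ \Big|\ \zeta\cdot z\geq\tfrac1s\bigl(\varphi_{\zeta,M}(sv)-\varphi_{\zeta,M}(0)\bigr)\Bigr\}.
\]

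Next I would pass to the limit $s\to0^+$. By Proposition~\ref{prp-phi} the quotient $\tfrac1s(\varphi_{\zeta,M}(sv)-\varphi_{\zeta,M}(0))$ converges to $\varphi'_{\zeta,M}(0)(v)$, so the only remaining issue is to carry this scalar limit through the set operation. For that I would establish the elementary fact that if $c_s\to c$ in $\RR$ then, in the sense of the limit of sets recalled above, $\lim_{s\to0^+}\{z\mid\zeta\cdot z\geq c_s\}=\{z\mid\zeta\cdot z\geq c\}$: the inclusion ``$\subseteq$'' follows by passing to the limit in $\zeta\cdot z_n\geq c_{s_n}$ along any sequence $z_n\to z$ with $z_n$ in the $n$-th half-space; for ``$\supseteq$'', given $z$ with $\zeta\cdot z\geq c$ one takes $z_s:=z+|\zeta|^{-2}(c_s-c)^+\zeta$, which lies in the $s$-th half-space and tends to $z$. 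Applying this with $c_s=\tfrac1s(\varphi_{\zeta,M}(sv)-\varphi_{\zeta,M}(0))$ yields
\[
\widehat J'_\zeta(0;M)(v)=\{z\in\RR^d\mid\zeta\cdot z\geq\varphi'_{\zeta,M}(0)(v)\}=S_{(\varphi'_{\zeta,M}(0),\zeta)}(v),
\]
the last step being precisely the definition \eqref{S}.

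Finally, for the identity $S_{(\varphi'_{\zeta,M}(0),\zeta)}(v)=H^+(\zeta)$ I would use the consequence of Proposition~\ref{prp-phi} already recorded in the text: since $y_\zeta$ minimizes $J_\zeta$ over $\mathcal X$ and $y_\zeta+sv\in\mathcal X$ for every $s\in\RR$ and every $v\in C_0^1([a,b];\RR^n)$, the classical first-variation argument forces $\varphi'_{\zeta,M}(0)(v)=J'_\zeta(y_\zeta)(v)=0$ for all such $v$; hence $S_{(\varphi'_{\zeta,M}(0),\zeta)}(v)=\{z\in\RR^d\mid\zeta\cdot z\geq0\}=H^+(\zeta)$, completing the proof. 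I expect the only genuine work to be the justification of the Painlev\'e--Kuratowski convergence of the half-spaces in the second step --- the interchange of the limit with the set operations; the rest is bookkeeping with \eqref{minuszeta}, \eqref{phi} and Proposition~\ref{prp-phi}.
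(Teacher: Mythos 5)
Your proposal is correct and follows essentially the same route as the paper: rewrite the $\zeta$-difference via \eqref{minuszeta} and \eqref{phi} as the half-space $\{z\mid\zeta\cdot z\geq\varphi_{\zeta,M}(sv)-\varphi_{\zeta,M}(0)\}$, divide by $s$, pass to the limit using Proposition~\ref{prp-phi}, and invoke $\varphi'_{\zeta,M}(0)(v)=0$. The only difference is that you spell out the Painlev\'e--Kuratowski convergence of the half-spaces, a step the paper leaves implicit; your justification of it is sound.
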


\begin{proof}
  Recalling definition \eqref{derivative}, for $s>0$, using \eqref{minuszeta}, we 
  have
  $$
  \begin{aligned}
  \frac{1}{s} &\left(\widehat J(sv;M)-_\zeta\widehat J(0;M)\right) =\left\{\frac{1}{s}z\in
      \RR^d\mid z\in\widehat J(sv;M)-_\zeta\widehat J(0;M)\right\} \\
    &= \left\{\frac{1}{s}z\in\RR^d\mid\zeta\cdot z+\inf_{z_0\in\widehat J(0;M)}
      \zeta\cdot z_0\geq\inf_{z_1\in\widehat J(sv;M)}\zeta\cdot z_1\right\} \\
    &= \left\{\frac{1}{s}z\in\RR^d\mid\zeta\cdot z+\varphi_{\zeta,M}(0)\geq
      \varphi_{\zeta,M}(sv)\right\} \\
    &= \left\{ z\in\RR^d\mid\zeta\cdot z\geq\frac{1}{s}(\varphi_{\zeta,M}(sv)-
      \varphi_{\zeta,M}(0))\right\}.
  \end{aligned}
  $$
  Taking the limit as $s$ tends to zero and by Proposition~\ref{prp-phi}, we 
  obtain that
  $$
  \widehat J'_\zeta(0;M)(v)= \left\{ z\in\RR^d\mid\zeta\cdot z\geq\varphi'_{\zeta,M}
    (0)(v)\right\}=H^+(\zeta)\, .
  $$
\end{proof}

The previous corollary says that if for some $\zeta\in C^+\bs\{0\}$ there exists 
a $\zeta$-solution $y_\zeta$ and if $M$ is an infimizer for $\overline J$ 
containing it, then the set-valued version of the Euler-Lagrange 
equation is \eqref{setELzeta} (see also \eqref{phi'zetaM0}).  Solving this equation 
is a necessary condition for $M$ being an infimizer.    If the conditions hold for 
every $\zeta\in C^+\bs\{0\}$ a unique equation can be written:
\begin{equation}
\label{setEL}
\sup_{\zeta\in C^+\bs\{0\}} \widehat J'_\zeta(0;M)(v)=C\, .
\end{equation}

\begin{example}
\label{ex_2}
  We want to move a mass point with unitary mass in one dimension from point 
  $A$ at time $t=0$ to point $B$ at time $t=1$ and we want to do that in a way 
  that minimizes the difference of kinetic energy and potential energy over time. 
  The potential energy might be generated by some external forces $f$ (as e.g. 
  gravity) that may vary with time. We assume two scenarios, either there are no 
  external forces, then the Lagrangian would be $L_1(t,y,p)=\frac12p^2$ and 
  the movement should take place with constant velocity, or there is a given 
  external force $f$, then the Lagrangian would be $L_2(t,y,p)=\frac12p^2+
  yf(t)$ and another movement would be optimal. Assume that we don't know 
  which scenario will occur. If we move with constant velocity which is optimal in 
  scenario 1 this might be rather expensive in scenario 2 and vice versa. So the 
  aim would be to find some compromise solution which is pretty good for both 
  scenarios. If there are known probabilities $\zeta_i$ for occurence of scenario 
  $i$, then minimizing the integrated expected energies is the same as using 
  the Lagrangian $L_\zeta$.
  
  The two components $L_1,L_2$ correspond to the Dirichlet principle and the 
  generalized Dirichlet principle, respectively: a solution $y(t)$ of $\ddot y=0$, 
  respectively of $\ddot y=f$, is a minimizer of $\int_0^1L_1(t,y,\dot y)dt$ or of 
  $\int_0^1L_2(t,y,\dot y)dt$.

  Let us consider the following Lagrangian:
  $$
  L(t,y,p)=\left( \begin{array}{c} \frac{1}{2}p^2 \\ \frac{1}{2}p^2+yt 
    \end{array} \right),
  $$
  with $t\in[0,1]$.   For $\zeta=(\zeta_1,\zeta_2)$, we have
  $L_\zeta(t,y,p)=\frac{\zeta_1}{2}p^2+\zeta_2\left(\frac{1}{2}p^2+yt\right)$.   
  The Euler-Lagrange equation is $(\zeta_1+\zeta_2)\ddot y=\zeta_2 t$ 
  and, using the boundary condition in the case $\zeta_1+\zeta_2\neq 0$, the 
  $\zeta$-solution $y_\zeta$ is
  \begin{equation}
  \label{yzeta_ex2}
  y_\zeta(t)=\frac{\zeta_2}{6(\zeta_1+\zeta_2)}t^3+\left(B-A-\frac{\zeta_2}{6(
    \zeta_1+\zeta_2)}\right)t+A\, .
  \end{equation}

  If we write the previous equation only for $\zeta=(\zeta_1,1-\zeta_1)$, with 
  $0\leq\zeta_1\leq 1$, we obtain
  $$
  y_\zeta(t)=\frac{1-\zeta_1}{6}t^3+\left(B-A-\frac{1-\zeta_1}{6}\right)t+A\, .
  $$
  In Figure~\ref{fig1} there are the graphs of the three arcs corresponding to 
  $\zeta=(0,1)$, $\left(\frac{1}{2},\frac{1}{2}\right)$ and $(1,0)$ for the choice 
  $A=0$ and $B=1$.
  \begin{figure}
  \begin{center}
  \includegraphics[height=8cm]{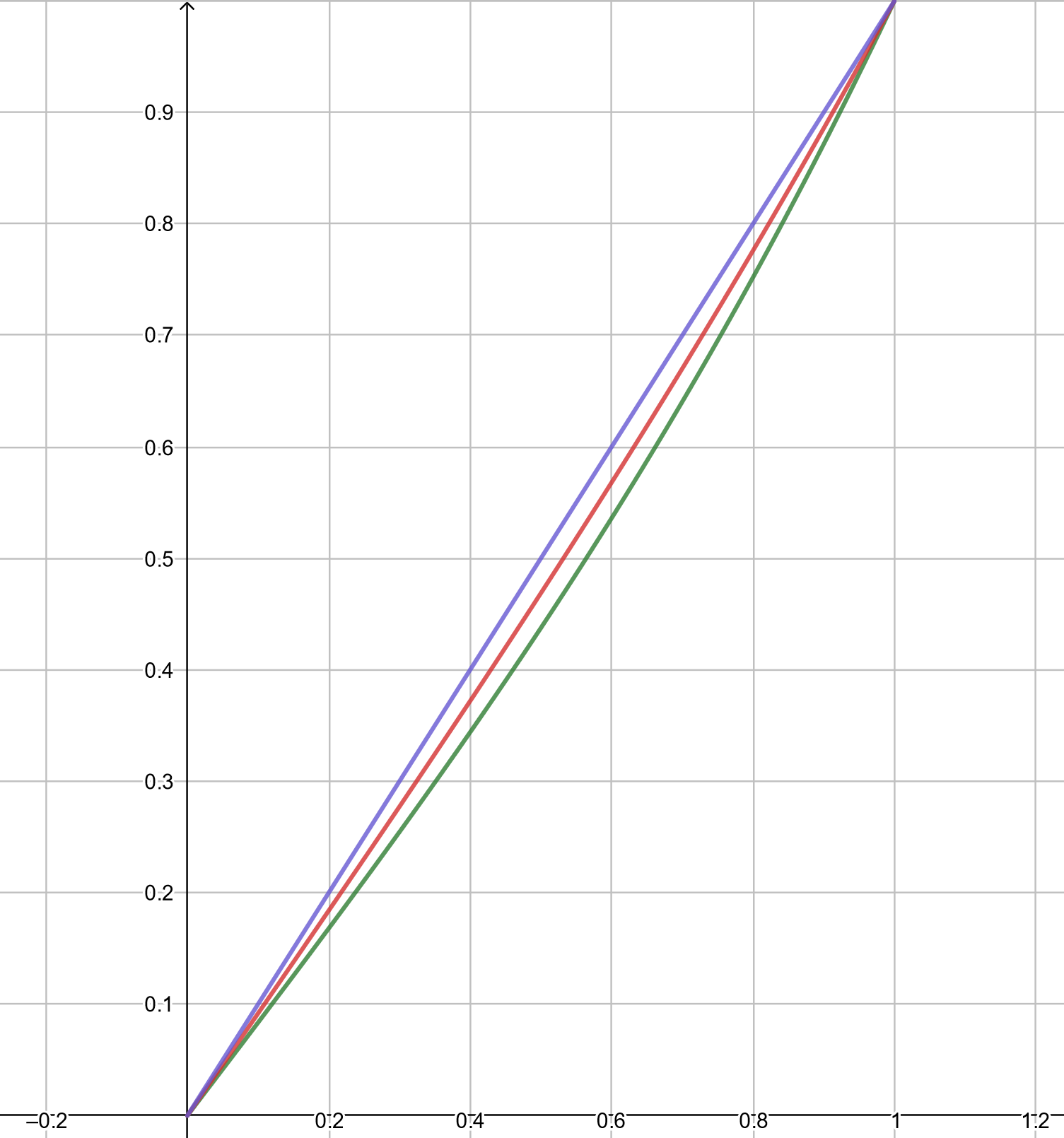}
  \end{center}
  \caption{If we choose $A=0$ and $B=1$, the graphs of the three arcs 
  $y_\zeta$, corresponding to $\zeta=(0,1)$, $\left(\frac{1}{2},\frac{1}{2}\right)$ 
  and $(1,0)$, are represented here. The green graph is $y_{(0,1)}(t)=
  \frac{1}{6}t^3+\frac{5}{6}t$.  The red graph is $y_{\left(\frac{1}{2},
  \frac{1}{2}\right)}(t)=\frac{1}{12}t^3+\frac{11}{12}t$.  Finally, the blue 
  graph is $y_{(1,0)}(t)=t$.}
  \label{fig1}
  \end{figure}
  We can conclude that the set $M$ should contain the arcs $y_\zeta$.
\end{example}

\section{Existence of an Infimizer}

In this section we consider a bigger set of admissible arcs, in order to obtain results 
of existence.  More precisely, the arcs need not to be $C^1$ but only to be in a 
suitable Sobolev space $W^{1,q}([a,b];\RR^n)$ with $1<q<+\infty$.   First 
of all we see differences and similarities with respect to the previous arcs' space.

The admissible set is
$$
\mathcal{X}_1=\{ y\in W^{1,q}([a,b];\RR^n)\mid y(a)=A,\ y(b)=B\}\, ,
$$
where we use the fact that if $y\in W^{1,q}([a,b];\RR^n)$, there exists $\tilde y\in 
C([a,b];\RR^n)$ with $y=\tilde y$ almost everywhere.

We assume that all the components of the Lagrangian satisfy the inequality
\begin{equation}
\label{ineq_L}
|L_i(t,y,p)|\leq K_1 \left( |y|^q+|p|^q+1 \right)
\end{equation}
for some constant $K_1$ and all $t\in[a,b]$, $y\in\RR^n$ and $p\in\RR^n$.  

In this section the functional is $\overline J\colon\mathcal{X}_1\to\G(\RR^d,
C)$, defined in the same way as before (condition \eqref{ineq_L} ensures that the 
functional is well defined).  If $M\subseteq\mathcal{X}_1$ is 
non-empty, the inf-translation of $\overline J$ by $M$ is a function
$$
\widehat J(\cdot;M)\colon W_0^{1,q}([a,b];\RR^n)\to\G(\RR^d,C).
$$

In the space $W^{1,q}([a,b];\RR^n)$ one cannot expect to have solutions of the 
Euler-Lagrange equation (\ref{EL_zeta}).  Let us suppose that the Lagrangian is 
$C^1$ and for every component the following 
inequalities hold
\begin{equation}
\label{der_L}
\left\{ \begin{array}{l} |D_pL_i(t,y,p)| \leq K_2 \left( |y|^{q-1}+|p|^{q-1}+1 \right)\\
|D_yL_i(t,y,p)| \leq K_2 \left( |y|^{q-1}+|p|^{q-1}+1 \right)
\end{array} \right.
\end{equation}
for some constant $K_2$ and all $t\in[a,b]$, $y\in\RR^n$ and $p\in\RR^n$.
We say that $y\in\mathcal{X}_1$ is a weak solution of the 
Euler-Lagrange equation in the direction $\zeta$ (\ref{EL_zeta}) provided that
$$
\int_a^b \left[ D_pL_\zeta(t,y,\dot y)\cdot\dot v+D_yL_\zeta(t,y,\dot y)\cdot v 
  \right] ds=0
$$
for all $v\in W_0^{1,q}([a,b];\RR^n)$.

In this section, for any $\zeta\in C^+\bs\{0\}$ and $M\subseteq
\mathcal{X}_1$, the function $\varphi_{\zeta,M}$ is understood as 
$\varphi_{\zeta,M}\colon W_0^{1,q}([a,b];\RR^n)\to\overline\RR$.  In this 
context Proposition \ref{prp-phi} can be reformulated, as it is stated in the following 
proposition.

\begin{proposition}
\label{prp-phi2}
  Let $L:[a,b]\times\RR^{2n}\to\RR^d$ be of class $C^1$ and satisfies the 
  inequalities \eqref{ineq_L} and \eqref{der_L}.  Let $M\subset\mathcal{X}_1$ 
  be an infimizer for $\overline J$.  Given $\zeta\in C^+\bs\{0\}$, suppose that there 
  exists a $\zeta$-solution $y_\zeta\in M$.  
  Then $\varphi_{\zeta,M}\colon W_0^{1,q}([a,b];\RR^n)\to\overline\RR$ is 
  Fr\'echet differentiable at $0$ and
  \begin{equation}
  \label{phi'zetaM0bis}
  \varphi_{\zeta,M}'(0)(v)=\int_a^b\left[D_y L_\zeta(t,y_\zeta,\dot y_\zeta)\cdot v+
    D_p L_\zeta(t,y_\zeta,\dot y_\zeta)\cdot\dot v \right] dt\, .
  \end{equation}
\end{proposition}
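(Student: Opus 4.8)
The strategy is to repeat the proof of Proposition~\ref{prp-phi} verbatim in structure, checking at each step that the $W^{1,q}$ framework (rather than $C^1$) does not break anything. The two ingredients of the original argument are: (i) the upper estimate $\varphi_{\zeta,M}(v)-\varphi_{\zeta,M}(0)\le J_\zeta(y_\zeta+v)-J_\zeta(y_\zeta)$, which follows purely from the definition $\varphi_{\zeta,M}(v)=\inf_{y\in M}J_\zeta(y+v)$ together with $y_\zeta\in M$, and hence holds unchanged; and (ii) the Fr\'echet differentiability of the scalar functional $J_\zeta\colon W^{1,q}([a,b];\RR^n)\to\RR$, $J_\zeta(y)=\int_a^b L_\zeta(t,y,\dot y)\,dt$, with the Gateaux-type formula for the derivative. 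The lower estimate then comes, exactly as before, from $\varphi_{\zeta,M}(v)-\varphi_{\zeta,M}(0)\ge 0$ (Lemma~\ref{lmm_infM0}, which is stated for $C_0^1$ but whose proof is abstract and carries over to $W_0^{1,q}$) combined with the $\pm v$ symmetrization argument that forces the linear term to be $o(\|v\|_{W^{1,q}})$.

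So the only genuinely new point is step (ii): that under the growth conditions \eqref{ineq_L} and \eqref{der_L} the functional $J_\zeta$ is well defined and Fr\'echet differentiable on $W^{1,q}$ with
\[
J_\zeta'(y)(v)=\int_a^b\left[D_yL_\zeta(t,y,\dot y)\cdot v+D_pL_\zeta(t,y,\dot y)\cdot\dot v\right]dt.
\]
This is a classical fact about integral functionals with Carath\'eodory integrands of power growth; I would invoke it from a standard reference (e.g. \cite{Giaq}) rather than reprove it. The key quantitative inputs are: \eqref{ineq_L} with exponent $q$ guarantees $L_\zeta(\cdot,y,\dot y)\in L^1$ for $y\in W^{1,q}$, so $J_\zeta$ is finite; and \eqref{der_L} with exponent $q-1$ guarantees that $D_yL_\zeta(t,y,\dot y)$ and $D_pL_\zeta(t,y,\dot y)$ lie in $L^{q'}$ (where $1/q+1/q'=1$, using $|y|,|\dot y|\in L^q$), so the candidate derivative is a bounded linear functional on $W^{1,q}$ by H\"older's inequality. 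Differentiability at a point then follows from the mean value theorem applied pointwise to the integrand, dominated convergence, and the continuity of the Nemytskii operators $y\mapsto D_yL_\zeta(\cdot,y,\dot y)$, $y\mapsto D_pL_\zeta(\cdot,y,\dot y)$ from $W^{1,q}$ into $L^{q'}$, which is where \eqref{der_L} is used again.

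The main obstacle, such as it is, is purely bookkeeping: one must be careful that $\|v\|_{W^{1,q}}\to 0$ controls $\|v\|_{L^\infty}$ (via the Sobolev embedding $W^{1,q}([a,b];\RR^n)\hookrightarrow C([a,b];\RR^n)$ for $q>1$ in dimension one) and $\|\dot v\|_{L^q}\to 0$ simultaneously, so that the remainder in the pointwise Taylor expansion of $L_\zeta(t,y_\zeta+v,\dot y_\zeta+\dot v)$ is genuinely $o(\|v\|_{W^{1,q}})$ after integration; this is where the growth control on the second derivatives implicit in the $C^1$-Nemytskii continuity replaces the uniform-continuity argument available in the $C^1$ setting. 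Once step (ii) is in hand, the conclusion $\varphi_{\zeta,M}'(0)(v)=J_\zeta'(y_\zeta)(v)$ and formula \eqref{phi'zetaM0bis} follow by squeezing $\varphi_{\zeta,M}(v)-\varphi_{\zeta,M}(0)$ between $0$ and $J_\zeta'(y_\zeta)(v)+o(\|v\|_{W^{1,q}})$ and symmetrizing, exactly as in Proposition~\ref{prp-phi}.
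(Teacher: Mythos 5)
Your proposal is correct and matches the paper's intent exactly: the paper gives no separate proof for this proposition, stating only that it is a reformulation of Proposition~\ref{prp-phi} in the $W^{1,q}$ setting, and your argument supplies precisely the details that reformulation requires (the unchanged upper/lower squeeze from the inf-translation and Lemma~\ref{lmm_infM0}, plus the classical Fr\'echet differentiability of $J_\zeta$ on $W^{1,q}$ guaranteed by the growth conditions \eqref{ineq_L} and \eqref{der_L}). Nothing is missing.
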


Also in this case it is immediate to conclude that $\varphi_{\zeta,M}'(0)(v)=0$ for 
any $v\in W_0^{1,q}([a,b];\RR^n)$.

In order to establish a set-valued Euler-Lagrange equation in the weak sense, 
let $W^{-1,q'}([a,b];\RR^n)$ be the dual space of $W_0^{1,q}([a,b];\RR^n)$, 
where $q'=\frac{q}{q-1}$, and let $\langle\cdot,\cdot\rangle$ denote the natural 
pairing. We write
\begin{equation}
\label{weak_sol}
\langle -\frac{d\ }{dt}D_pL_\zeta(t,y,\dot y)+D_yL_\zeta(t,y,\dot y),v\rangle 
  = \int_a^b \left( D_pL_\zeta(t,y,\dot y)\cdot\dot v+D_yL_\zeta(t,y,\dot y)\cdot v 
  \right) ds\, ,
\end{equation}
where the second derivative of the Lagrangian is meant 
in the weak sense, as it can be seen in the right-hand side.  
For any $\xi\in 
W^{-1,q'}([a,b];\RR^n)$  and $\zeta\in C^+\bs\{0\}$, we can 
consider $S_{(\xi,\zeta)}:W_0^{1,q}([a,b];\RR^n)\to\G(\RR^d,C)$ 
as in defintion \eqref{S}.

We can now state the following corollary.

\begin{corollary}
  Let $L:[a,b]\times\RR^{2n}\to\RR^d$ be of class $C^1$ and satisfy the 
  inequalities \eqref{ineq_L} and \eqref{der_L}.  Let $M\subset
  \mathcal{X}_1$ be an infimizer for $\overline J$.  Given $\zeta\in C^+\bs
  \{0\}$, suppose that there exists a $\zeta$-solution $y_\zeta\in M$.  
  Then for any $v\in W_0^{1,q}([a,b];\RR^n)$
  \begin{equation}
  \label{setELzeta_weak}
  \widehat J'_\zeta(0;M)(v)=S_{(\varphi'_{\zeta,M}(0),\zeta)}(v)=H^+(\zeta)\, .
  \end{equation}
\end{corollary}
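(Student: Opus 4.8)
The plan is to essentially replay the proof of the earlier corollary (the one establishing \eqref{setELzeta}), replacing the $C^1$ setting by the Sobolev setting and invoking Proposition~\ref{prp-phi2} in place of Proposition~\ref{prp-phi}. First I would unfold the definition \eqref{derivative} of the $\zeta$-directional derivative of $\widehat J(\cdot;M)$ at $0$ in direction $v$: for $s>0$, using the representation \eqref{minuszeta} of the $\zeta$-difference, one has
$$
\frac{1}{s}\left(\widehat J(sv;M)-_\zeta\widehat J(0;M)\right)
 = \left\{z\in\RR^d\mid \zeta\cdot z\geq \frac{1}{s}\bigl(\varphi_{\zeta,M}(sv)-\varphi_{\zeta,M}(0)\bigr)\right\},
$$
exactly as in the proof of the $C^1$ corollary, since the only facts used there are the definition \eqref{phi} of $\varphi_{\zeta,M}$ and the infimum formula \eqref{inf_sup_F}--\eqref{inf_sup_G}, both of which are still available for $M\subseteq\mathcal X_1$ and $\widehat J(\cdot;M)\colon W_0^{1,q}([a,b];\RR^n)\to\G(\RR^d,C)$.

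Next I would pass to the limit $s\to 0^+$. By Proposition~\ref{prp-phi2}, $\varphi_{\zeta,M}$ is Fr\'echet differentiable at $0$, so $\frac{1}{s}(\varphi_{\zeta,M}(sv)-\varphi_{\zeta,M}(0))\to\varphi'_{\zeta,M}(0)(v)$, and therefore
$$
\widehat J'_\zeta(0;M)(v)=\{z\in\RR^d\mid \zeta\cdot z\geq\varphi'_{\zeta,M}(0)(v)\}
 = S_{(\varphi'_{\zeta,M}(0),\zeta)}(v),
$$
where the last equality is just the definition \eqref{S} of $S_{(\xi,\zeta)}$ with $\xi=\varphi'_{\zeta,M}(0)\in W^{-1,q'}([a,b];\RR^n)$ (this functional lies in the dual by the growth bounds \eqref{der_L}, which guarantee the integral in \eqref{phi'zetaM0bis} defines a bounded linear functional on $W_0^{1,q}$). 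Finally, since $y_\zeta$ is a $\zeta$-solution in $M$, $0$ minimizes $\varphi_{\zeta,M}$ by Lemma~\ref{lmm_infM0}, hence $\varphi'_{\zeta,M}(0)(v)=0$ for all $v$ — equivalently, $y_\zeta$ is a weak solution of the Euler-Lagrange equation in the direction $\zeta$ in the sense of \eqref{weak_sol} — so $S_{(\varphi'_{\zeta,M}(0),\zeta)}(v)=\{z\mid\zeta\cdot z\geq 0\}=H^+(\zeta)$, which completes the chain of equalities.

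The only genuinely new point compared with the $C^1$ corollary — and hence the main thing to be careful about — is the interchange of $\lim_{s\to 0^+}$ with the set operation defining $\widehat J'_\zeta(0;M)(v)$: one must check that the Painlev\'e–Kuratowski-type limit of the half-spaces $\{z\mid\zeta\cdot z\geq \alpha_s\}$ with $\alpha_s:=\tfrac1s(\varphi_{\zeta,M}(sv)-\varphi_{\zeta,M}(0))\to 0$ is indeed $H^+(\zeta)$. This is elementary since $\alpha_s$ is a convergent scalar net and the half-spaces move continuously with their offset, but it is the step where the Sobolev framework could in principle differ from the $C^1$ one; here it does not, because Proposition~\ref{prp-phi2} already delivers the required scalar convergence. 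Everything else is a verbatim transcription, so I would keep the proof short, citing Proposition~\ref{prp-phi2} and \eqref{minuszeta}, \eqref{S} as the load-bearing ingredients.
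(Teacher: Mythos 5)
Your proposal is correct and follows essentially the same route as the paper, which states this corollary without a written proof precisely because it is the verbatim analogue of the $C^1$ corollary \eqref{setELzeta}: unfold \eqref{derivative} via \eqref{minuszeta} to get a half-space with offset $\tfrac1s(\varphi_{\zeta,M}(sv)-\varphi_{\zeta,M}(0))$, pass to the limit using Proposition~\ref{prp-phi2}, and use $\varphi'_{\zeta,M}(0)(v)=0$ (noted in the paper right after Proposition~\ref{prp-phi2}) to identify the result with $H^+(\zeta)$. Your added remarks on $\varphi'_{\zeta,M}(0)\in W^{-1,q'}$ via \eqref{der_L} and on the continuity of the half-space limit are accurate and only make explicit what the paper leaves implicit.
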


The previous equation \eqref{setELzeta_weak} can also be written:
$$
\widehat J'_\zeta(0;M)(v)=S_{\left(-\frac{d\ }{dt}D_pL_\zeta(t,y_\zeta,\dot y_\zeta)
  +D_yL_\zeta(t,y_\zeta,\dot y_\zeta),\zeta\right)}(v)=H^+(\zeta)\, ,
$$
where we have used \eqref{phi'zetaM0bis} and \eqref{weak_sol}.

We assume now that the Lagrangian is coercive in every direction of the 
dual cone, in the sense that there exist $\alpha>0$, $\beta\geq 0$ such that
\begin{equation}
\label{coerc}
L_\zeta(t,y,p)\geq\alpha|p|^q-\beta
\end{equation}
for any $\zeta\in B$, where $B$ is a base of $C^+$, $p,y\in\RR^n$, $t\in[a,b]$.

If $\overline L(t,y,p)$ is convex in $(y,p)$, it is immediate to see that 
$\overline J$ is convex and for every $\zeta\in C^+\bs\{0\}$ 
$L_\zeta(t,y,p)$ is also convex in $(y,p)$.

By the classical results (see \cite{Evans} and \cite{Giaq}), if the coercivity 
inequalities and the convexity condition hold, then for every $\zeta\in C^+
\bs\{0\}$ there exists at least one $\zeta$- solution $y_\zeta\in
\mathcal{X}_1$ solving
\begin{equation}
\label{zeta_pb}
J_\zeta(y_\zeta)=\min_{y\in\mathcal{X}_1} J_\zeta(y)\, .
\end{equation}

The following theorem gives a sufficient condition for $M$ to be an 
infimizer.

\begin{theorem}
\label{trm_Minf}
  Assume that $L$ satisfies the coercivity inequalities (\ref{coerc}) in every 
  direction $\zeta\in B$, where $B$ is a base of $C^+$ 
  and $\overline L(t,y,p)$ is convex in $(y,p)$.  Then the set
  \begin{equation}
  \label{Minf}
  M=\{ y_\zeta\in\mathcal{X}_1\mid\zeta\in B \}\, ,
  \end{equation}
  with $y_\zeta$ $\zeta$-solution, is a $C^+$-solution of
  $\inf_{y\in\mathcal{X}_1} \overline J(y)$.
\end{theorem}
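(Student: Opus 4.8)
The plan is to verify the definition of a $C^+$-solution directly: I must show (i) that every element $y_\zeta\in M$ is a $\zeta$-solution of the problem over $\mathcal{X}_1$, and (ii) that $M$ is an infimizer, i.e. $\inf\overline J[M]=\inf\overline J[\mathcal{X}_1]$. Point (i) is essentially given: by the classical existence theorem invoked just before the statement (coercivity \eqref{coerc} plus convexity of $\overline L$ in $(y,p)$ imply that $J_\zeta$ is coercive and sequentially weakly lower semicontinuous on the reflexive space $\mathcal{X}_1$, hence attains its minimum), each $y_\zeta$ in the construction \eqref{Minf} satisfies \eqref{zeta_pb}, which is precisely condition \eqref{ZetaSol} with $\mathcal{X}$ replaced by $\mathcal{X}_1$. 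So the real content is (ii).

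For (ii) I would use Lemma~\ref{lmm_infM0}: since $\overline L$ is convex in $(y,p)$, the functional $\overline J$ is convex as a map into $\G(\RR^d,C)$, and the set $M$ in \eqref{Minf} can be taken convex (or one replaces $M$ by its convex hull, which is still contained in $\mathcal{X}_1$ and still an infimizer if $M$ is, by the monotonicity remark after Definition~\ref{SolConcept}; alternatively one argues the $\varphi_{\zeta,M}$ condition directly without convexity of $M$, since only the convexity half of the lemma that I actually need requires it — here I should be slightly careful and either check convexity of $\operatorname{co}M$ suffices or note that the forward implication I want goes through the scalarized characterization regardless). By the convex part of Lemma~\ref{lmm_infM0}, it is enough to show that $0\in W_0^{1,q}([a,b];\RR^n)$ minimizes $\varphi_{\zeta,M}$ for every $\zeta\in C^+\bs\{0\}$. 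Using \eqref{phi2}, $\varphi_{\zeta,M}(v)=\inf_{y\in M}\zeta\cdot J(y+v)=\inf_{y\in M}J_\zeta(y+v)$, so I need
$$
\inf_{y\in M}J_\zeta(y+v)\;\ge\;\inf_{y\in M}J_\zeta(y)\qquad\text{for all }v\in W_0^{1,q},\ \zeta\in C^+\bs\{0\}.
$$
The right-hand side is $\le J_\zeta(y_{\zeta_0})$ where $\zeta_0\in B$ is the representative of $\zeta$ (Remark~\ref{rmk-|zeta|=1}: $\zeta=s\zeta_0$ with $s>0$, and $J_\zeta=sJ_{\zeta_0}$, $y_\zeta=y_{\zeta_0}$), and since $y_{\zeta_0}$ minimizes $J_{\zeta_0}$ over all of $\mathcal{X}_1$, in particular $J_{\zeta_0}(y_{\zeta_0})=\min_{y\in\mathcal{X}_1}J_{\zeta_0}(y)$. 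Meanwhile every $y+v$ with $y\in M\subseteq\mathcal{X}_1$ and $v\in W_0^{1,q}$ lies in $\mathcal{X}_1$ (the boundary values are unchanged), so $J_{\zeta_0}(y+v)\ge J_{\zeta_0}(y_{\zeta_0})$ for each such $y$, whence $\inf_{y\in M}J_{\zeta_0}(y+v)\ge J_{\zeta_0}(y_{\zeta_0})=\inf_{y\in\mathcal{X}_1}J_{\zeta_0}(y)$. Dividing/multiplying by the scalar $s$ gives the displayed inequality for $\zeta$. Hence $0$ minimizes $\varphi_{\zeta,M}$ for every $\zeta$, and Lemma~\ref{lmm_infM0} yields that $M$ is an infimizer; combined with (i), $M$ is a $C^+$-solution.

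The main obstacle — really the only subtle point — is the interface between Lemma~\ref{lmm_infM0}, which is stated over the $C^1$ admissible set $\mathcal{X}$, and the present Sobolev setting over $\mathcal{X}_1$: I should make explicit that the lemma and the identity \eqref{phi2} carry over verbatim with $C^1_0$ replaced by $W_0^{1,q}$ and $\mathcal{X}$ by $\mathcal{X}_1$, since the proof of the lemma used only the lattice formulas \eqref{inf_sup_F}, \eqref{inf_sup_G}, separation in $\RR^d$, and the absorption property $y+v\in\mathcal{X}_1$ for $v\in W_0^{1,q}$ — none of which depends on the specific arc space. A secondary bookkeeping point is handling $\zeta\in C^+\bs\{0\}$ not in the base $B$ via the positive-scaling invariance of both $J_\zeta$ and the set of $\zeta$-solutions, exactly as in Remark~\ref{rmk-|zeta|=1}. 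No hard estimates are needed beyond those already absorbed into the cited classical existence result.
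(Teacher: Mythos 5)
Your overall strategy is sound and, in substance, the same as the paper's: the paper also reduces everything to a separation argument in $\RR^d$ plus the scalar optimality of the $y_\zeta$'s. The difference is that the paper runs the separation directly on $\Gamma_M=\inf_{y\in M}\overline J(y)=\cl\co\bigcup_{\zeta}\overline J(y_\zeta)$ (if $J(y_0)\notin\Gamma_M$, separate, show the separating functional $\xi$ lies in $C^+$, and contradict $J_\xi(y_0)\geq J_\xi(y_\xi)$), whereas you route the same contradiction through the converse half of Lemma~\ref{lmm_infM0}. Your part (i) and your verification that $0$ minimizes $\varphi_{\zeta,M}$ for every $\zeta\in C^+\bs\{0\}$ (including the reduction to the base $B$ via positive homogeneity) are correct, as is the observation that the lemma transfers verbatim from $C_0^1$/$\mathcal X$ to $W_0^{1,q}$/$\mathcal X_1$.

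The one genuine gap is the applicability of the "if and only if" part of Lemma~\ref{lmm_infM0}: it is stated for \emph{convex} $M$, and the $M$ of \eqref{Minf} is a one-parameter family of arcs with no reason to be convex. Neither of your two hedges closes this. Replacing $M$ by $\co M$ proves that $\co M$ is an infimizer, which does not imply that the smaller set $M$ is one (convexity of $\overline J$ gives $\overline J(\lambda y_1+(1-\lambda)y_2)\supseteq\lambda\overline J(y_1)+(1-\lambda)\overline J(y_2)$, so $\inf\overline J[\co M]$ may be strictly larger, i.e.\ strictly smaller in the lattice order, than $\inf\overline J[M]$); moreover $\co M$ need not consist only of $\zeta$-solutions, so the $C^+$-solution claim would be lost. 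And the "forward implication" of the lemma is the wrong direction: you need (scalar minimality of $0$ for all $\zeta$) $\Rightarrow$ (infimizer), which is precisely the half stated under the convexity hypotheses. The correct repair is to note that here $\overline J$ is taken $\G(\RR^d,C)$-valued, so $\Gamma_M=\cl\co\bigcup_{y\in M}\overline J(y)$ is closed and convex \emph{automatically}, and the only place the lemma's proof uses convexity of $M$ is to guarantee convexity of $\Gamma_M$ before separating; hence the converse implication holds for your non-convex $M$ in the $\G$-setting. (This is exactly why the paper proves the theorem by a direct separation from $\Gamma_M$ rather than citing the lemma, and why it remarks afterwards that in $\F(\RR^d,C)$ one must convexify $M$.) With that one sentence added, your argument is complete and equivalent to the paper's.
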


\begin{proof}
  In order to prove that $\inf_{y\in M} \overline J(y)=\inf_{y\in\mathcal{X}_1} 
  \overline J(y)$,
  we only need to prove that for every $y_0\in\mathcal{X}_1$
  \begin{equation}
  \label{Jy0}
  J(y_0)\in\inf_{y\in M} \overline J(y)=\cl\co\bigcup_{\zeta\in C^+
    \bs\{0\}}\overline J(y_\zeta)\, .
  \end{equation}
  Let $\Gamma_M$ be defined as $\Gamma_M=\inf_{y\in M} \overline J(y)$.  
  Let us assume that $J(y_0)\notin\Gamma_M$.   Then by the separation theorem 
  there exist $\xi\in\RR^d$ and $\alpha\in\RR$ such that
  \begin{equation}
  \label{sep}
  \xi\cdot J(y_0)<\alpha<\xi\cdot z\qquad \mbox{for any }z\in\Gamma_M\, .
  \end{equation}
  We want to prove that $\xi\in C^+$.  If it were not, there would be $c\in C$ 
  such that $\xi\cdot c<0$, but this would imply that 
  $\inf_{z\in\Gamma_M}\xi\cdot z=-\infty$ and it would contradict \eqref{sep}.  
  Now, if $\xi\in C^+$, \eqref{sep} means that
  $$
  J_\xi(y_0)<\alpha\leq\inf_{z\in\Gamma_M}\xi\cdot z=J_\xi(y_\xi)
  $$
  and this is a contradiction to the definition of $y_\xi$ as minimizing 
  $J_\xi(\cdot)$.
\end{proof}

If one prefers to work in the space $\F(\RR^d,C)$, then $M$ in \eqref{Minf} must 
be replaced by its convexification.

By recalling Remark \ref{rmk-|zeta|=1}, the infimizer $M$ can be built considering 
only some $\zeta\in C^+\bs\{0\}$.

\begin{example}
  Let us consider the following Lagrangian function $L:[0,1]\times\RR^2\to\RR^2$:
  $$
  L(t,y,p)=\left( \begin{array}{c} p^2+4y^2 \\ tp+p^2 \end{array} \right)
  $$
  and $\mathcal{X}_1=\{ y\in W^{1,2}([0,1];\RR)\mid y(0)=0,\ y(1)=1\}$.  
  We consider three different cones:
  $$
  \begin{aligned}
  C_1 &=\RR^d_+\, ,\\
  C_2 &=\left\{(\rho\cos\theta,\rho\sin\theta)\mid\rho\geq0,\; 
    0\leq\theta\leq\frac{\pi}{3}\right\},\\
  C_3 &=\left\{(\rho\cos\theta,\rho\sin\theta)\mid\rho\geq0,\; 
    -\frac{\pi}{4}\leq\theta\leq\frac{\pi}{2}\right\} .
  \end{aligned}
  $$
  Their dual cones are:
  $$
  \begin{aligned}
  C_1^+ &=\RR^d_+\, ,\\
  C_2^+ &=\left\{(\rho\cos\theta,\rho\sin\theta)\mid\rho\geq0,\; 
    -\frac{\pi}{6}\leq\theta\leq\frac{\pi}{2}\right\},\\
  C_3^+ &=\left\{(\rho\cos\theta,\rho\sin\theta)\mid\rho\geq0,\; 
    0\leq\theta\leq\frac{\pi}{4}\right\} .
  \end{aligned}
  $$
  For $\zeta=(\zeta_1,\zeta_2)$ we have $L_\zeta(t,y,p)=(\zeta_1+\zeta_2)p^2
  +\zeta_2 tp+4\zeta_1 y^2$ 
  and if we consider only $\zeta$ on the line $\zeta_1+\zeta_2=1$ 
  we obtain $L_\zeta(t,y,p)=p^2+(1-\zeta_1)tp+4\zeta_1 y^2$.
  
  We prove that $\overline L$ is convex in $(y,p)$.  It is enough to show that
  for every $t\in[0,1]$, $(y_1,p_1),(y_2,p_2)\in\RR^2$ and $\lambda\in[0,1]$
  $$
  \lambda L(t,y_1,p_1)+(1-\lambda)L(t,y_2,p_2)\in L(t,\lambda y_1+(1-\lambda)y_2,
    \lambda p_1+(1-\lambda)p_2)+C_i
  $$
  with $i=1,2,3$.   Using the convexity of the Lagrangian components, it is easy 
  to see that
  $$
  \lambda L(t,y_1,p_1)+(1-\lambda)L(t,y_2,p_2)- L(t,\lambda y_1+(1-\lambda)y_2,
    \lambda p_1+(1-\lambda)p_2)={a+b\choose a},
  $$
  with $a=\lambda(1-\lambda)(p_1-p_2)^2$, $b=4\lambda(1-\lambda)(y_1-y_2)^2$ 
  nonnegative and this vector is in all the three cones.
  
  The elements $\zeta\in C^+_i$ and such that they belong to the line $\zeta_1
  +\zeta_2=1$ are respectively:
  $$
  \begin{aligned}
  \zeta\in C^+_1\colon&\quad\zeta=\binom{\zeta_1}{1-\zeta_1}\mbox{ with }
    0\leq\zeta_1\leq 1\, , \\
  \zeta\in C^+_2\colon&\quad\zeta=\binom{\zeta_1}{1-\zeta_1}\mbox{ with }
    0\leq\zeta_1\leq\frac{3+\sqrt{3}}{2} \, , \\
  \zeta\in C^+_3\colon&\quad\zeta=\binom{\zeta_1}{1-\zeta_1}\mbox{ with }
    \frac12\leq\zeta_1\leq 1\, .
  \end{aligned}
  $$
  For these $\zeta$ the projection $L_\zeta$ is coercive and the hypotheses of 
  Theorem~\ref{trm_Minf} are met.
  
  The Euler-Lagrange equation is $2\ddot y+1-\zeta_1=8\zeta_1y$.
  For $\zeta_1$ positive the solutions are of the type
  $y(t)=c_1e^{2\sqrt{\zeta_1}t}+c_2e^{-2\sqrt{\zeta_1}t}+
  \frac{1-\zeta_1}{8\zeta_1}$,
  with $c_1,c_2$ constants.   In order to fulfill the boundary conditions, we obtain
  $$
  y_\zeta(t)= \frac{(1-\zeta_1)\left(e^{-2\sqrt{\zeta_1}}-1\right)+8\zeta_1}
      {8\zeta_1\left(e^{2\sqrt{\zeta_1}}-e^{-2\sqrt{\zeta_1}}\right)}
      e^{2\sqrt{\zeta_1}t}
    -\frac{(1-\zeta_1)\left(e^{2\sqrt{\zeta_1}}-1\right)+8\zeta_1}{8\zeta_1
      \left(e^{2\sqrt{\zeta_1}}-e^{-2\sqrt{\zeta_1}}\right)}
      e^{-2\sqrt{\zeta_1}t}+\frac{1-\zeta_1}{8\zeta_1}\, .
  $$
  For $\zeta_1=0$ the solution is $y_\zeta=-\frac14t^2+\frac54t$.
  
  The sets
  $$
  \begin{aligned}
  M_1 &= \left\{ y_\zeta\mid\zeta=\binom{\zeta_1}{\zeta_2}\in C_1^+, 
      \zeta_1+\zeta_2=1\right\} 
    =\left\{ y_\zeta\mid\zeta=\binom{\zeta_1}{1-\zeta_1}, 
      0\leq\zeta_1\leq 1\right\} \\
  M_2 &= \left\{ y_\zeta\mid\zeta=\binom{\zeta_1}{\zeta_2}\in C_2^+, 
      \zeta_1+\zeta_2=1\right\}
    = \left\{ y_\zeta\mid\zeta=\binom{\zeta_1}{1-\zeta_1}, 
      0\leq\zeta_1\leq\frac{3+\sqrt{3}}{2}\right\} \\
  M_3 &= \left\{ y_\zeta\mid\zeta=\binom{\zeta_1}{\zeta_2}\in C_3^+, 
      \zeta_1+\zeta_2=1\right\} 
    = \left\{ y_\zeta\mid\zeta=\binom{\zeta_1}{1-\zeta_1}, 
      \frac12\leq\zeta_1\leq 1\right\} \\
  \end{aligned}
  $$
  are infimizers.
  
  The infima in the three cases are plotted in Figure~\ref{fig2}.
  \begin{figure}
  \begin{center}
  \includegraphics[width=5.2cm]{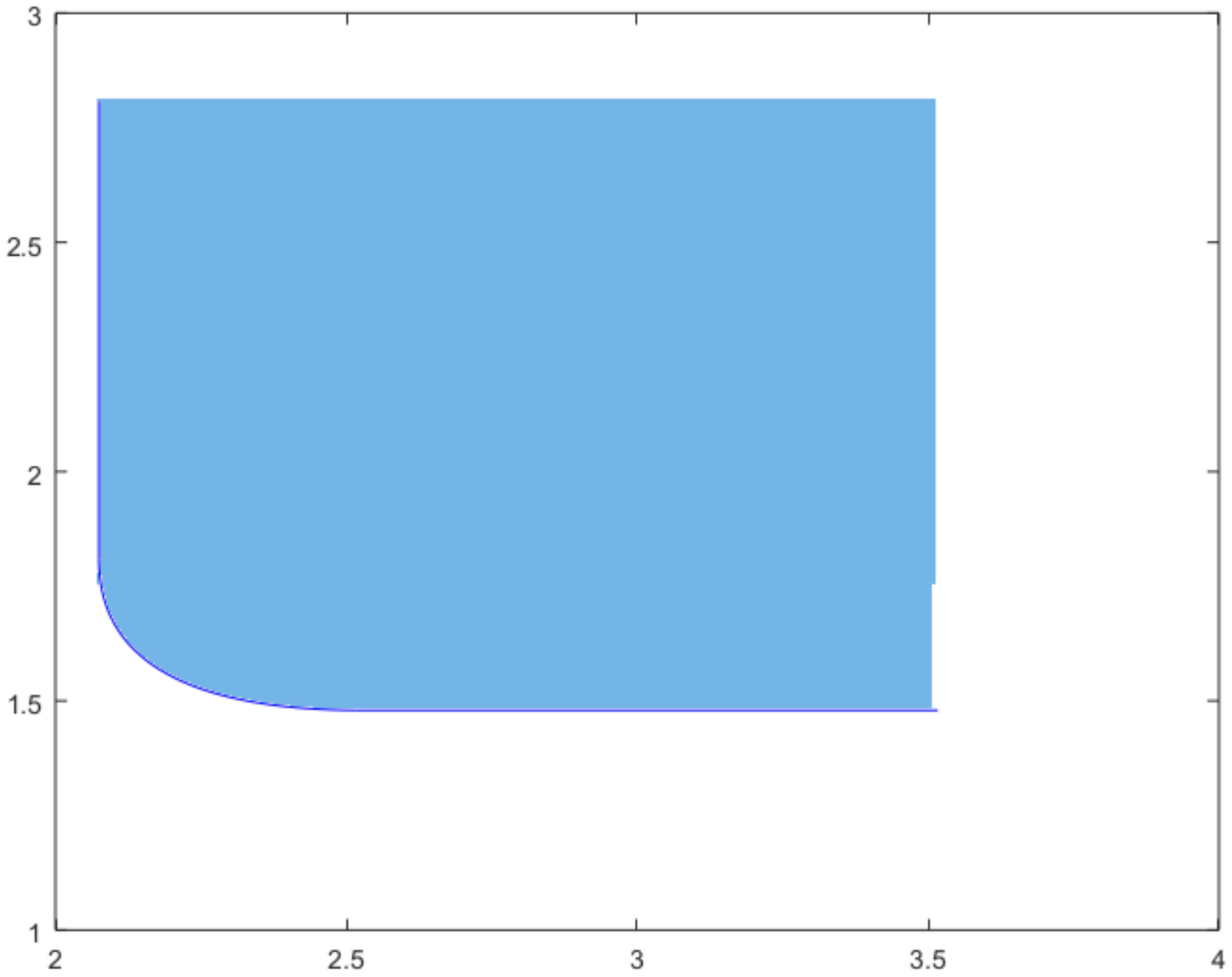}
  \includegraphics[width=5.2cm]{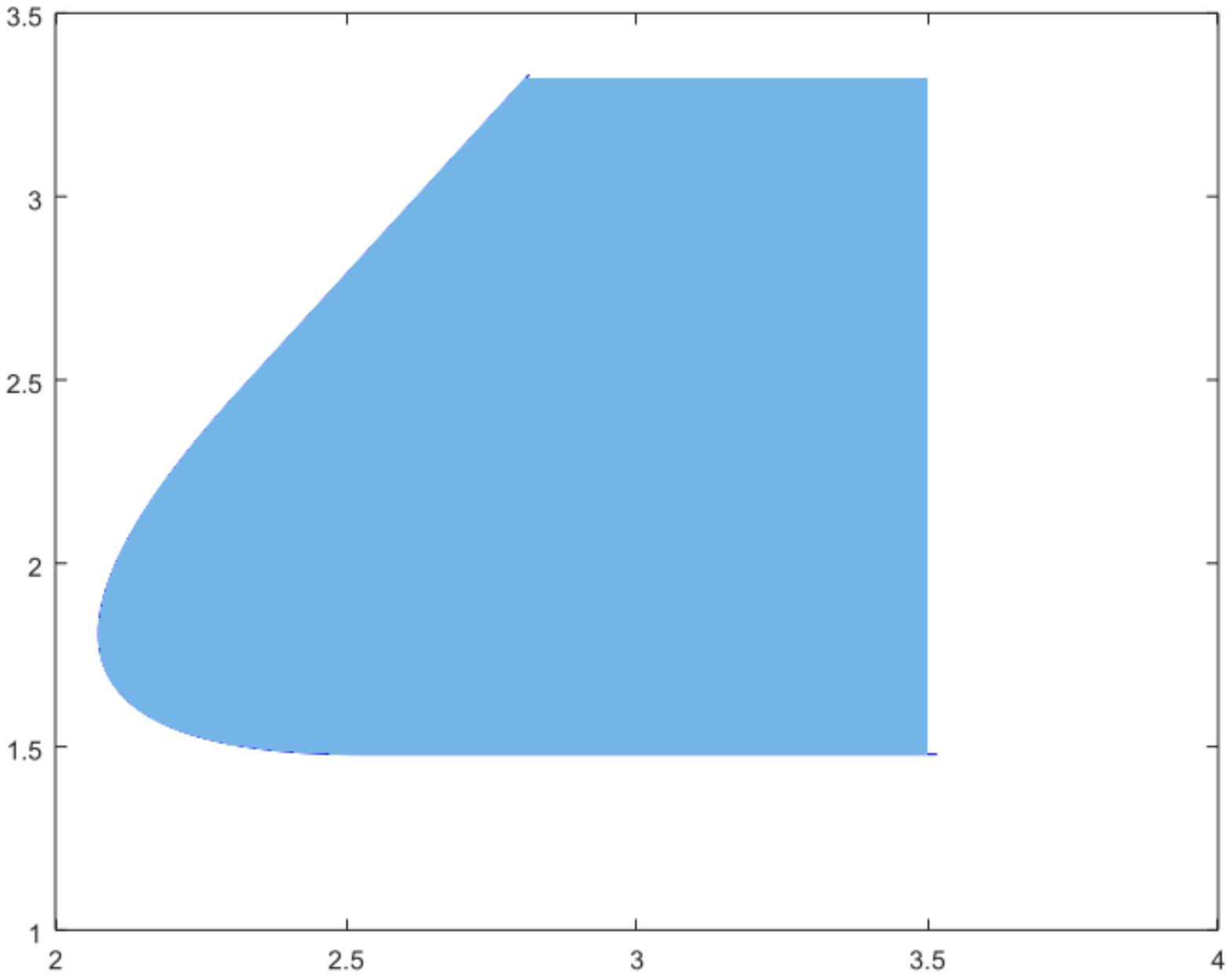}
  \includegraphics[width=5.2cm]{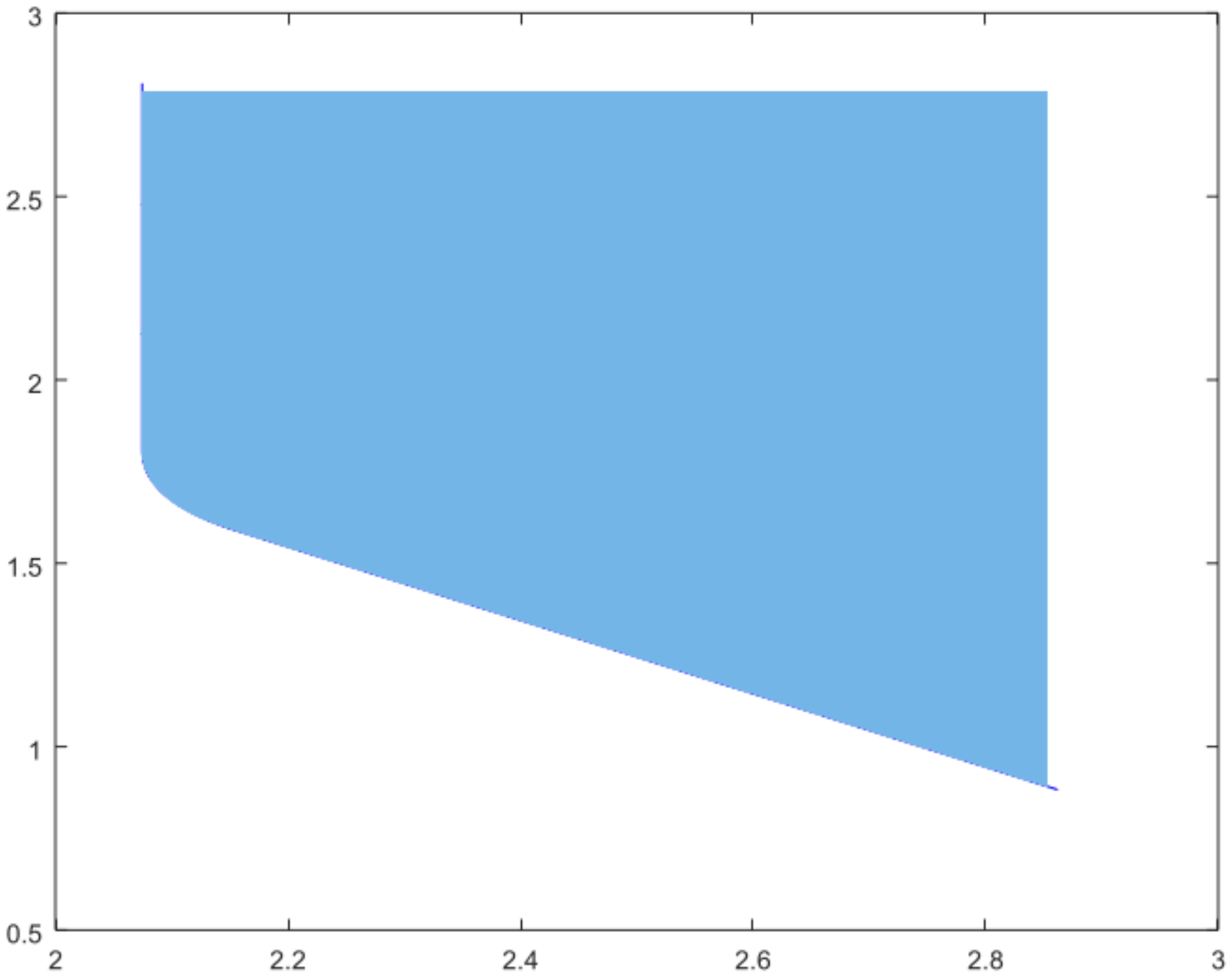}
  \end{center}
  \caption{The infima $\inf\overline J[M_i]$, $i=1,2,3$, that correspond to the three 
  cones are plotted here.}
  \label{fig2}
  \end{figure}
\end{example}

\begin{remark}
  For the Lagrangian of Example~\ref{ex_2}, the coercivity condition is not valid.  
  To fix the ideas, let the cone be $C=\RR^d_+$.   Except for the projection along 
  the horizontal direction, the Lagrangian $L_\zeta$ is not coercive and is not even 
  bounded from below.  All the same, considering the arcs in $W^{1,2}([0,1],\RR)$ 
  and using Poincar\'e inequality, it is possible to prove that
  \begin{equation}
  \label{J(y)2}
  (J(y))_2\geq\frac{1}{2}\|\dot y\|^2_{L^2(0,1)}-K_1\| y\|_{L^2(0,1)}
  \geq \frac{1}{2}\|\dot y\|^2_{L^2(0,1)}-K_2\|\dot y\|_{L^2(0,1)}+K_3\, ,
  \end{equation}
  with $K_1,K_2,K_3$ constants.  In fact, let $y_0$ be any arc in $\mathcal{X}_1$.  
  Then $y-y_0\in W_0^{1,2}([0,1];\RR)$ and there exist $K,K'$ constants such 
  that
  $$
  \begin{aligned}
  \| y\|_{L^2(0,1)} &\leq\| y-y_0\|_{L^2(0,1)}+\| y_0\|_{L^2(0,1)} 
      \leq K\|\dot y-\dot y_0\|_{L^2(0,1)}+\| y_0\|_{L^2(0,1)} \\
    &\leq K\|\dot y\|_{L^2(0,1)}+K'\, .
  \end{aligned}
  $$
  From (\ref{J(y)2}) there exist $\alpha>0$ and $\beta\geq 0$ such that 
  $(J(y))_2\geq\alpha \|\dot y\|^2_{L^2(0,1)}-\beta$.

  With this observation we can see that, even if $L_\zeta$ is not coervive in $p$, 
  the functional $J_\zeta$ is coercive in $y\in W^{1,2}([0,1],\RR)$ and the 
  corresponding $\zeta$-solutions $y_\zeta$ 
  exist.  We can then conclude that $M=\{ y_\zeta(t)\mid\zeta\in\RR^d_+\bs\{0\}\}$
  is an infimizer for the functional $\overline J$.
\end{remark}

\section{Isoperimetric Problems}

In this section we want to consider isoperimetric problems.  See also \cite{Evans} and 
\cite{Giaq}.  In the first part we recall well-known real-valued results and then 
we derive a vector-valued result.

The considered problem is the minimization of the functional
\begin{equation}
\label{overlineJ}
\overline J(y)=\int_a^b\overline L(t,y(t),\dot y(t))\, dt
\end{equation}
on the set
\begin{equation}
\label{X_2}
\mathcal{X}_2= \left\{ y\in W^{1,q}([a,b];\RR^n)\mid y(a)=A,\ y(b)=B,\ \int_a^b 
  G(y(t))\, dt = 0\right\} ,
\end{equation}
where $q>1$ and $G:\RR^n\to\RR^m$ is such that every component verifies
\begin{equation}
\label{nablaGi}
|\nabla G_i(w)|\leq K\left(|w|^{q-1}+1\right)
\end{equation}
for some constant $K$.

The following real-valued existence result of a constrained minimizer holds 
(this result is more general than the one in \cite{Evans}).

\begin{proposition}
  Assume that $L$ satisfies the coercivity inequalities (\ref{coerc}) for any $\zeta\in 
  C^+\bs\{0\}$ such that $|\zeta|=1$ and that $\overline L(t,y,p)$ is convex in 
  $p$. Assume that $G$ satisfies 
  (\ref{nablaGi}).  Moreover, assume that $\mathcal{X}_2$ is not empty.  For any 
  $\zeta\in C^+\bs\{0\}$, there exists a $\zeta$-solution $y_\zeta\in
  \mathcal{X}_2$ satisfying
  \begin{equation}
  \label{y_zeta}
  J_\zeta(y_\zeta)=\inf_{y\in\mathcal{X}_2} J_\zeta(y)\, .
  \end{equation}
\end{proposition}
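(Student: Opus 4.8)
The plan is to apply the direct method of the calculus of variations. First I would reduce to the normalization $|\zeta| = 1$: since $J_{s\zeta}(y) = s\,J_\zeta(y)$ for every $s > 0$ and every $y$, an arc minimizes $J_\zeta$ over $\mathcal{X}_2$ if and only if it minimizes $J_{s\zeta}$ over $\mathcal{X}_2$, so it suffices to produce $y_\zeta$ when $|\zeta| = 1$, which is precisely the range in which \eqref{coerc} is assumed. Set $m = \inf_{y \in \mathcal{X}_2} J_\zeta(y)$. From \eqref{coerc} one gets $J_\zeta(y) \ge \alpha\|\dot y\|_{L^q([a,b])}^q - \beta(b-a) > -\infty$ for every $y \in \mathcal{X}_2$, while $m < +\infty$ because $\mathcal{X}_2 \neq \emptyset$ and the growth bound \eqref{ineq_L} makes $J_\zeta$ finite on $W^{1,q}([a,b];\RR^n)$; hence $m \in \RR$.

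Next I would fix a minimizing sequence $\{y_k\} \subseteq \mathcal{X}_2$ with $J_\zeta(y_k) \to m$. The coercivity bound forces $\|\dot y_k\|_{L^q([a,b])}$ to be bounded, and since $y_k(a) = A$ one has $|y_k(t)| \le |A| + (b-a)^{1/q'}\|\dot y_k\|_{L^q([a,b])}$ by H\"older's inequality with $q' = q/(q-1)$, so $\{y_k\}$ is bounded in $W^{1,q}([a,b];\RR^n)$. As $1 < q < +\infty$ this space is reflexive, so along a subsequence $y_k \rightharpoonup y_\zeta$ weakly in $W^{1,q}([a,b];\RR^n)$; by the compact embedding $W^{1,q}([a,b];\RR^n) \hookrightarrow C([a,b];\RR^n)$ the same subsequence converges to $y_\zeta$ uniformly. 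Uniform convergence then yields $y_\zeta(a) = A$ and $y_\zeta(b) = B$, and, $G$ being continuous, $\int_a^b G(y_k(t))\,dt \to \int_a^b G(y_\zeta(t))\,dt = 0$; thus $y_\zeta \in \mathcal{X}_2$.

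Finally I would invoke weak lower semicontinuity. Since $\overline L(t,y,p)$ is convex in $p$ and $\zeta \in C^+$, the integrand $L_\zeta(t,y,p) = \zeta \cdot L(t,y,p)$ is continuous and convex in $p$, and by \eqref{coerc} the function $L_\zeta + \beta$ is nonnegative. Applying the classical Tonelli-type lower semicontinuity theorem (see \cite{Evans} or \cite{Giaq}) to this integrand, with $y_k \to y_\zeta$ strongly in $L^q$ (indeed uniformly) and $\dot y_k \rightharpoonup \dot y_\zeta$ weakly in $L^q$, gives $J_\zeta(y_\zeta) \le \liminf_{k\to\infty} J_\zeta(y_k) = m$. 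Since $y_\zeta \in \mathcal{X}_2$ we also have $J_\zeta(y_\zeta) \ge m$, so \eqref{y_zeta} follows. The step I expect to require the most care is the lower semicontinuity argument: one must check that the strong convergence of $\{y_k\}$ supplied by the compact embedding, together with convexity of $L_\zeta$ in $p$ \emph{alone}, is enough — so that convexity in the full pair $(y,p)$, which is not among the hypotheses here, is not needed — and that the lower bound coming from \eqref{coerc} legitimately permits the application of the semicontinuity theorem, for instance by running it on the nonnegative integrand $L_\zeta + \beta$ and subtracting the constant $\beta(b-a)$ at the end.
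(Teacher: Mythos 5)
Your proof is correct and follows essentially the same route as the paper's: the direct method with a minimizing sequence, coercivity giving boundedness in $W^{1,q}$, weak compactness, the compact embedding into $C([a,b];\RR^n)$ to recover the boundary conditions and the isoperimetric constraint, and Tonelli-type weak lower semicontinuity from convexity in $p$ plus the lower bound from \eqref{coerc}. The only cosmetic differences are that you add the (worthwhile) normalization to $|\zeta|=1$ and pass to the limit in the constraint via uniform convergence, whereas the paper uses the growth bound on $\nabla G_i$ and dominated convergence.
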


\begin{proof}
  Let $\{y_k\}_{k\in\NN}\subset\mathcal{X}_2$ be a minimizing sequence:
  $\lim_{k\to+\infty}J_\zeta(y_k)=\inf_{y\in\mathcal{X}_2} J_\zeta(y)=m_0$.
  It is possible to see that the hypothesis of coercivity implies that $\{y_k
  \}_{k\in\NN}$ is bounded in $W^{1,q}([a,b];\RR^n)$.  There exists a 
  subsequence, that we still denote $\{y_k\}_{k\in\NN}$, and a function 
  $y_0\in W^{1,q}([a,b];\RR^n)$ such that $y_k\rightharpoonup y_0$ in 
  $W^{1,q}([a,b];\RR^n)$.
  Since  the functional $J_\zeta$ is weakly lower semicontinuous 
  $J_\zeta(y_0)\leq \lim_{k\to+\infty}J_\zeta(y_k)=m_0$.
  We want to prove that $y_0\in\mathcal{X}_2$.  By Rellich-Kondrachov theorem 
  $W^{1,q}([a,b];\RR^n)$ is compactly embedded in $C([a,b];\RR^n)$.  
  Consequently, both $y_k$ and $G(y_k)$ tend pointwise to $y_0$ and $G(y_0)$, 
  respectively.  From (\ref{nablaGi}), we easily deduce that 
  $|G_i(w)|\leq K'\left(|w|^q+1\right)$, where $K'$ is a positive constant.  Since
  $$
  |G_i(y_k)|\leq K'\left(|y_k|^q+1\right)\leq K''\left(|y_0|^q+1\right),
  $$
  with $K''$ positive constant, by the dominated convergence theorem
  $$
  0=\lim_{k\to +\infty}\int_a^bG_i(y_k(t))dt=\int_a^bG_i(y_0(t))dt\, .
  $$
  Then $y_0\in\mathcal{X}_2$ and coincides with $y_\zeta$.
\end{proof}

Such a minimizer of the real-valued problem (\ref{y_zeta}) is a solution of the 
following Lagrange multiplier problem.

\begin{proposition}
  Let $L:[a,b]\times\RR^{2n}\to\RR^d$ be of class $C^1$ and satisfy 
  \eqref{ineq_L} and \eqref{der_L}.   Assume that $G$ satisfies 
  (\ref{nablaGi}).  Let $y_\zeta\in
  \mathcal{X}_2$ be such that (\ref{y_zeta}) holds.  Suppose 
  that there exist $w_i\in W_0^{1,q}([a,b];\RR^n)$, with $1\leq i\leq m$ such that
  \begin{equation}
  \label{hyp_wi}
  \det\left(\int_a^b\nabla G_i(y_\zeta)\cdot w_j\, dt\right)\neq 0\, .
  \end{equation}
  Then there exists $\lambda=(\lambda_1,\dots,\lambda_d)\in\RR^d$ such that 
  for any $v\in W_0^{1,q}([a,b];\RR^n)$
  \begin{equation}
  \label{Lagr_eq_constr}
  \int_a^b \left[ D_yL_\zeta(t,y_\zeta,\dot y_\zeta)\cdot v+D_pL_\zeta(t,y_\zeta,
    \dot y_\zeta)\cdot\dot v \right] dt - \sum_{i=1}^m \lambda_i \int_a^b 
    \nabla G_i(y_\zeta)\cdot v\, dt=0\, .
  \end{equation}
\end{proposition}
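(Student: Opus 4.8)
The plan is to prove the Lagrange multiplier rule for the scalarized isoperimetric problem by a standard finite-dimensional perturbation argument, treating the $\zeta$-solution $y_\zeta$ as a constrained minimizer of the real-valued functional $J_\zeta$ on $\mathcal{X}_2$. First I would fix an arbitrary $v \in W_0^{1,q}([a,b];\RR^n)$ and the $m$ functions $w_1,\dots,w_m$ given by hypothesis \eqref{hyp_wi}, and consider the perturbed arc $y_\zeta + \tau v + \sum_{j=1}^m \sigma_j w_j$ for $(\tau,\sigma) = (\tau,\sigma_1,\dots,\sigma_m)$ in a neighbourhood of $0$ in $\RR^{m+1}$. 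Define the map $\Phi \colon \RR^{m+1} \to \RR^m$ by $\Phi_i(\tau,\sigma) = \int_a^b G_i\bigl(y_\zeta + \tau v + \sum_j \sigma_j w_j\bigr)\, dt$. The growth condition \eqref{nablaGi}, together with the Sobolev embedding $W^{1,q}\hookrightarrow C$, ensures that $\Phi$ is $C^1$, and its partial Jacobian in $\sigma$ at the origin is precisely the matrix $\left(\int_a^b \nabla G_i(y_\zeta)\cdot w_j\, dt\right)$, which is invertible by \eqref{hyp_wi}.

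Next I would apply the implicit function theorem: since $\Phi(0,0)=0$ (because $y_\zeta\in\mathcal{X}_2$) and $D_\sigma\Phi(0,0)$ is invertible, there is a $C^1$ function $\sigma = \sigma(\tau)$, defined for $\tau$ near $0$, with $\sigma(0)=0$ and $\Phi(\tau,\sigma(\tau))=0$; differentiating the identity $\Phi(\tau,\sigma(\tau))\equiv 0$ at $\tau=0$ yields $\dot\sigma(0) = -\bigl(D_\sigma\Phi(0,0)\bigr)^{-1} D_\tau\Phi(0,0)$. The arc $z(\tau) := y_\zeta + \tau v + \sum_j \sigma_j(\tau) w_j$ then lies in $\mathcal{X}_2$ for all small $\tau$, so the scalar function $\tau \mapsto J_\zeta(z(\tau))$ has a local minimum at $\tau=0$ by \eqref{y_zeta}. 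Using the Fréchet differentiability of $J_\zeta$ on $W^{1,q}$ (guaranteed by \eqref{ineq_L} and \eqref{der_L}, exactly as in Proposition~\ref{prp-phi2}), the chain rule gives
\[
0 = \left.\frac{d}{d\tau}\right|_{\tau=0} J_\zeta(z(\tau)) = J_\zeta'(y_\zeta)\Bigl(v + \sum_{j=1}^m \dot\sigma_j(0)\, w_j\Bigr).
\]

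Finally I would unwind this identity. Writing out $J_\zeta'(y_\zeta)$ via \eqref{phi'zetaM0bis} and substituting the expression for $\dot\sigma(0)$, the relation becomes
\[
\int_a^b\bigl[D_yL_\zeta\cdot v + D_pL_\zeta\cdot\dot v\bigr]dt
= -\sum_{j=1}^m \dot\sigma_j(0)\int_a^b\bigl[D_yL_\zeta\cdot w_j + D_pL_\zeta\cdot\dot w_j\bigr]dt,
\]
and expanding $\dot\sigma(0) = -\bigl(D_\sigma\Phi(0,0)\bigr)^{-1} D_\tau\Phi(0,0)$ with $D_\tau\Phi_i(0,0) = \int_a^b\nabla G_i(y_\zeta)\cdot v\, dt$ shows that the right-hand side is a linear combination $\sum_{i=1}^m \lambda_i \int_a^b \nabla G_i(y_\zeta)\cdot v\, dt$ with coefficients $\lambda_i$ that depend only on $y_\zeta$, the $w_j$'s and the Lagrangian — crucially \emph{not} on $v$. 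Since $v$ was arbitrary, this is exactly \eqref{Lagr_eq_constr}. The main obstacle is the book-keeping in this last step: one must verify carefully that the would-be multipliers $\lambda_i$ are independent of $v$, which is why the argument is arranged so that $v$ enters $D_\tau\Phi$ linearly and the $w_j$-dependent block $D_\sigma\Phi(0,0)$ is inverted once and for all; the regularity of $\Phi$ and differentiability of $J_\zeta$ are then routine consequences of the stated growth bounds and the compact Sobolev embedding.
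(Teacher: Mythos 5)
Your proposal is correct and follows essentially the same route as the paper: perturb $y_\zeta$ by $\tau v+\sum_j\sigma_j w_j$, use \eqref{hyp_wi} and the implicit function theorem to solve the constraint for $\sigma(\tau)$, differentiate $J_\zeta$ along the resulting admissible curve at $\tau=0$, and read off the multipliers $\lambda_i$ from $\dot\sigma(0)=-W^{-1}D_\tau\Phi(0,0)$, which makes their independence of $v$ explicit. Your version is marginally tidier in that the two-sided parameter interval immediately yields a vanishing derivative (the paper's one-sided $[0,\delta)$ strictly speaking requires also running the argument with $-v$), but the substance is identical.
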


\begin{proof}
  Let us denote by $W$ the matrix $m\times m$, whose elements are 
  $W_{ij}=\left(\int_a^b\nabla G_i(y_\zeta)\cdot w_j\, dt\right)$
  and by $h:\RR\times\RR^m\to\RR^m$ the map
  $$
  h(s,\sigma)=\int_a^b G\left(y_\zeta(t)+sv(t)+\sum_{i=1}^m\sigma_iw_i(t)
    \right) dt\, .
  $$
  Since $h(0,0)=0$ and $\det\frac{\partial h}{\partial\sigma}(0,0)\neq 0$,
  it is possible to apply the implicit function theorem and there exists $\sigma:[0,\delta)
  \to\RR^m$ such that $\sigma(0)=0$, $h(s,\sigma(s))=0$ for all $s\in[0,\delta)$ 
  and
  $$
  \sigma'(0)=-\left(\frac{\partial h}{\partial\sigma}(0,0)\right)^{-1}\frac{\partial h}
    {\partial s}(0,0)=-W^{-1}\frac{\partial h}{\partial s}(0,0)\, ,
  $$
  where the components of the vector $\frac{\partial h}{\partial s}(0,0)$ are
  $\int_a^b\nabla G_i(y_\zeta)\cdot v\, dt$.
  We denote by $w$ the curve $w:[0,\delta)\to W_0^{1,q}([a,b];\RR^n)$ defined 
  as
  \begin{equation}
  \label{w(s)}
  w(s)=sv+\sum_{i=1}^m \sigma_i(s)w_i\, .
  \end{equation}
  The curve $y_\zeta+w(s)$ is in $\mathcal{X}_2$.  By (\ref{y_zeta}) 
  $\left.\frac{d}{ds}J_\zeta(y_\zeta+w(s))\right|_{s=0}=0$ and this gives
  $$
  \begin{aligned}
  \int_a^b &\left[ D_yL_\zeta(t,y_\zeta,\dot y_\zeta)\cdot v+
      D_pL_\zeta(t,y_\zeta,\dot y_\zeta)\cdot\dot v \right] dt \\
    &-\sum_{i,j=1}^m \left(W^{-1}\right)_{ij}\int_a^b [ D_yL_\zeta(t,y_\zeta,
      \dot y_\zeta)\cdot w_i+D_pL_\zeta(t,y_\zeta,\dot y_\zeta)\cdot\dot w_i ] dt 
      \left(\frac{\partial h}{\partial s}(0,0)\right)_j =0\, .
  \end{aligned}
  $$
  The previous equation can be written as
  $$
  \int_a^b \left[ D_yL_\zeta(t,y_\zeta,\dot y_\zeta)\cdot v+
      D_pL_\zeta(t,y_\zeta,\dot y_\zeta)\cdot\dot v \right] dt 
    -\sum_{j=1}^m \lambda_j\int_a^b\nabla G_j(y_\zeta)\cdot v\, dt=0\, ,
  $$
  with
  $$
  \lambda_j=\sum_{i=1}^m \left(W^{-1}\right)_{ij}\int_a^b [ D_yL_\zeta(t,y_\zeta,
      \dot y_\zeta)\cdot w_i+D_pL_\zeta(t,y_\zeta,\dot y_\zeta)\cdot\dot w_i ] dt\, .
  $$
\end{proof}

\begin{remark}
  The arc $y_\zeta$ in the previous proposition is a weak solution of
  \begin{equation}
  \label{EL_eig}
  D_yL_\zeta(t,y,\dot y)-\frac{d}{dt}D_pL_\zeta(t,y,\dot y)
    - \sum_{i=1}^m \lambda_i \nabla G_i(y)=0\, .
  \end{equation}
\end{remark}

\begin{proposition}
  Let $L:[a,b]\times\RR^{2n}\to\RR^d$ be of class $C^1$ and satisfy 
  \eqref{ineq_L} and \eqref{der_L}.   Assume that $G$ satisfies 
  (\ref{nablaGi}).  Let $M\subset
  \mathcal{X}_2$ be an infimizer for $\overline J$.  Given $\zeta\in
  C^+\bs\{0\}$, suppose that there exists a $\zeta$-solution $y_\zeta\in M$.
  Suppose also that there exist $w_i\in W_0^{1,q}([a,b];\RR^n)$, with 
  $1\leq i\leq m$ such that \eqref{hyp_wi} holds.

  For every $v\in W_0^{1,q}([a,b];\RR^n)$,  let $w(s)$ be as in (\ref{w(s)}).  Then 
  there holds
  \begin{equation}
  \label{ELvarphi}
  \begin{aligned}
  \lefteqn{\left.\frac{d}{ds}\varphi_{\zeta,M}(w(s))\right|_{s=0}} \\
    &=\int_a^b \left[ D_yL_\zeta(t,y_\zeta,\dot y_\zeta)\cdot v+D_pL_\zeta(t,y_\zeta,
    \dot y_\zeta)\cdot\dot v \right] dt - \sum_{i=1}^m \lambda_i 
    \int_a^b \nabla G_i(y_\zeta)\cdot v\, dt
    =0\, .
  \end{aligned}
  \end{equation}
\end{proposition}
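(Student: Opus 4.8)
The plan is to run the same sandwich argument that proves Proposition~\ref{prp-phi2}, but now along the implicit-function-theorem curve $w(s)$ of \eqref{w(s)}, using the Lagrange-multiplier computation of the preceding proposition to control the upper side.

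First I would record two elementary facts. Since $y_\zeta\in M\subseteq\mathcal{X}_2$ is a $\zeta$-solution, formula \eqref{phi2} gives $\varphi_{\zeta,M}(0)=\inf_{y\in M}J_\zeta(y)=\inf_{y\in\mathcal{X}_2}J_\zeta(y)=J_\zeta(y_\zeta)$, and, again because $y_\zeta\in M$, for every $s$ in the interval on which $w$ is defined,
\[
\varphi_{\zeta,M}(w(s))=\inf_{y\in M}J_\zeta(y+w(s))\le J_\zeta(y_\zeta+w(s))=:g(s).
\]
Next I would analyse $g$. By the construction in \eqref{w(s)} one has $y_\zeta+w(s)\in\mathcal{X}_2$, so $s=0$ is a minimum point of $g$ because $y_\zeta$ minimizes $J_\zeta$ over $\mathcal{X}_2$; moreover the chain-rule computation carried out in the preceding proposition — which rests on the Fr\'echet differentiability of $J_\zeta$ on $W^{1,q}$ granted by \eqref{ineq_L} and \eqref{der_L}, together with $\sigma\in C^1$ and $\sigma(0)=0$ from the implicit function theorem — shows that $g$ is differentiable at $0$ with
\[
g'(0)=\int_a^b\big[D_yL_\zeta(t,y_\zeta,\dot y_\zeta)\cdot v+D_pL_\zeta(t,y_\zeta,\dot y_\zeta)\cdot\dot v\big]\,dt-\sum_{i=1}^m\lambda_i\int_a^b\nabla G_i(y_\zeta)\cdot v\,dt=0 .
\]
Consequently $\varphi_{\zeta,M}(w(s))-\varphi_{\zeta,M}(0)\le g(s)-g(0)=o(|s|)$.

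For the reverse inequality $\varphi_{\zeta,M}(w(s))\ge\varphi_{\zeta,M}(0)$ I would use the infimizer hypothesis in the shape of an $\mathcal{X}_2$-version of Lemma~\ref{lmm_infM0}, namely that $0$ is a minimizer of $\varphi_{\zeta,M}$. Granting it, the two estimates squeeze the difference quotient: dividing $0\le\varphi_{\zeta,M}(w(s))-\varphi_{\zeta,M}(0)\le o(|s|)$ by $s$ and letting $s\to 0^+$ and $s\to 0^-$ shows that $\left.\tfrac{d}{ds}\varphi_{\zeta,M}(w(s))\right|_{s=0}$ exists and equals $0$; combined with the displayed value of $g'(0)$, this is precisely \eqref{ELvarphi}.

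The main obstacle is this lower bound. In the unconstrained setting of Proposition~\ref{prp-phi2} it was immediate, since adding any $v\in W_0^{1,q}$ to an element of $M$ stays admissible; here the isoperimetric constraint $\int_a^b G(y)\,dt=0$ is destroyed by such a perturbation, so $M+W_0^{1,q}$ is no longer contained in $\mathcal{X}_2$ and the proof of Lemma~\ref{lmm_infM0} does not carry over verbatim. Handling this comparison correctly — whether by restating the lemma so that only constraint-preserving perturbations are matched, or by exploiting that the particular curve $w(s)$ keeps $y_\zeta$ feasible — is the delicate point; differentiability of $J_\zeta$, the implicit-function-theorem curve, and the evaluation of $g'(0)$ are all inherited from the preceding propositions.
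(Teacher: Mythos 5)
Your proposal is essentially the paper's own proof: the paper disposes of the statement in one line via exactly your sandwich $0\le \frac{1}{s}\left(\varphi_{\zeta,M}(w(s))-\varphi_{\zeta,M}(0)\right)\le \frac{1}{s}\left(J_\zeta(y_\zeta+w(s))-J_\zeta(y_\zeta)\right)$, with the right-hand side tending to $0$ by the preceding Lagrange-multiplier proposition. The lower bound that you rightly single out as the delicate point is simply asserted without comment in the paper; it does hold once one reads $\overline J$ (hence $\widehat J(\cdot;M)$ and $\varphi_{\zeta,M}$) as taking the value $\emptyset$ (resp.\ $+\infty$) off $\mathcal{X}_2$, so that only the constraint-preserving competitors $y+w(s)\in\mathcal{X}_2$ enter the infimum and each of these is bounded below by $\inf_{y\in\mathcal{X}_2}J_\zeta(y)=\varphi_{\zeta,M}(0)$.
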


\begin{proof}
  Since
  $$
  0\leq\frac{1}{s}\left(\varphi_{\zeta,M}(w(s))-\varphi_{\zeta,M}(0)\right)\leq 
    \frac{1}{s}\left(J_\zeta(y_\zeta+w(s))-J_\zeta(y_\zeta)\right),
  $$
  (\ref{ELvarphi}) holds.
\end{proof}

We give now the set valued equation for the isoperimetric problem, 
recalling the definition \eqref{derivative_curve} of derivative along a curve.

\begin{corollary}
\label{prp-weaksol_iso}
  Let $L:[a,b]\times\RR^{2n}\to\RR^d$ be of class $C^1$ and satisfy 
  the inequalities \eqref{ineq_L} and \eqref{der_L}.    Assume that $G$ 
  satisfies (\ref{nablaGi}).    Let $M\subset
  \mathcal{X}_2$ be an infimizer for $\overline J$.  Given $\zeta\in
  C^+\bs\{0\}$, suppose that there exists $y_\zeta\in M$ such that 
  $\varphi_{\zeta,M}(0)=J_\zeta(y_\zeta)$.  
  Suppose that there exist $w_i\in W_0^{1,q}([a,b];\RR^n)$, with $1\leq i\leq m$ 
  such that \eqref{hyp_wi} holds.

  For every $v\in W_0^{1,q}([a,b];\RR^n)$,  let $w(s)$ be as in (\ref{w(s)}).  Then 
  there holds
  \begin{equation}
  \label{set_weak_sol_eig_z}
  \left.\frac{d\ }{ds}\widehat J_\zeta(w(s);M)\right|_{s=0}=
  S_{\left(D_yL_\zeta(t,y_\zeta,\dot y_\zeta)-\frac{d}{dt}D_pL_\zeta(t,
    y_\zeta,\dot y_\zeta)- \sum_{i=1}^m \lambda_i \nabla G_i(y_\zeta),
    \zeta\right)}(v)=H^+(\zeta)\, .
  \end{equation}
\end{corollary}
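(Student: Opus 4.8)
The plan is to compute the $\zeta$-difference quotient for $\widehat{J}$ along the curve $w(s)$ exactly as in the proof of the earlier corollary for the unconstrained problem (equation \eqref{setELzeta_weak}), but now using the curve derivative \eqref{derivative_curve} in place of the directional derivative. First I would fix $v\in W_0^{1,q}([a,b];\RR^n)$, let $w(s)=sv+\sum_{i=1}^m\sigma_i(s)w_i$ be the curve constructed via the implicit function theorem (so that $w(0)=0$ and $y_\zeta+w(s)\in\mathcal{X}_2$), and for $s>0$ unwind the definition of $\widehat{J}_\zeta(w(s);M)-_\zeta\widehat{J}_\zeta(0;M)$ using formula \eqref{minuszeta}. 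Since $\inf_{z\in\widehat{J}(w(s);M)}\zeta\cdot z=\varphi_{\zeta,M}(w(s))$ by \eqref{phi}, this gives
$$
\frac{1}{s-0}\left(\widehat J_\zeta(w(s);M)-_\zeta\widehat J_\zeta(0;M)\right)
=\left\{z\in\RR^d\mid \zeta\cdot z\geq\frac{1}{s}\left(\varphi_{\zeta,M}(w(s))-\varphi_{\zeta,M}(0)\right)\right\}
$$
exactly as in the unconstrained corollary's proof.

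Next I would take the limit $s\to 0^+$. By the preceding proposition (equation \eqref{ELvarphi}) the scalar quantity $\tfrac{1}{s}(\varphi_{\zeta,M}(w(s))-\varphi_{\zeta,M}(0))$ converges to $\left.\tfrac{d}{ds}\varphi_{\zeta,M}(w(s))\right|_{s=0}$, which equals
$$
\int_a^b\left[D_yL_\zeta(t,y_\zeta,\dot y_\zeta)\cdot v+D_pL_\zeta(t,y_\zeta,\dot y_\zeta)\cdot\dot v\right]dt-\sum_{i=1}^m\lambda_i\int_a^b\nabla G_i(y_\zeta)\cdot v\,dt=0.
$$
Hence the limiting set is $\{z\in\RR^d\mid\zeta\cdot z\geq 0\}=H^+(\zeta)$, which is precisely $S_{(\xi,\zeta)}(v)$ with $\xi$ the functional whose value at $v$ is the left-hand side of the displayed equation; identifying that functional (in the weak sense, as in \eqref{weak_sol}) with $D_yL_\zeta(t,y_\zeta,\dot y_\zeta)-\tfrac{d}{dt}D_pL_\zeta(t,y_\zeta,\dot y_\zeta)-\sum_{i=1}^m\lambda_i\nabla G_i(y_\zeta)$ yields the stated expression \eqref{set_weak_sol_eig_z}.

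The only subtlety, and the place I would be most careful, is the interchange of limit and the set operation: one must argue that $\lim_{s\to 0^+}\{z\mid\zeta\cdot z\geq c(s)\}=\{z\mid\zeta\cdot z\geq c_0\}$ in the Painlev\'e–Kuratowski sense whenever $c(s)\to c_0$ in $\overline{\RR}$, which is the same elementary fact already used implicitly in the unconstrained corollary. Here $c_0=0$ is finite, so there is no issue with the improper values $\RR^d$ or $\emptyset$, and the convergence of half-spaces to a half-space is immediate from the definition of $\lim_{n\to\infty}A_n$ given in Section~2 together with the one-sided nature of the curve domain $[0,\delta)$. Everything else is a direct transcription of the unconstrained argument with $w(s)$ replacing $sv$ and \eqref{ELvarphi} replacing Proposition~\ref{prp-phi}.
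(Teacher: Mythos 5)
Your proposal is correct and is exactly the argument the paper intends: the corollary is stated without an explicit proof, but it follows by transcribing the proof of the unconstrained corollary \eqref{setELzeta} with $w(s)$ in place of $sv$, the curve derivative \eqref{derivative_curve} in place of \eqref{derivative}, and \eqref{ELvarphi} in place of Proposition~\ref{prp-phi}, which is precisely what you do. Your extra care about the Painlev\'e--Kuratowski convergence of the half-spaces $\{z\mid\zeta\cdot z\geq c(s)\}$ as $c(s)\to 0$ is a point the paper leaves implicit, and it is handled correctly.
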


\section{Shape Optimization of Energy-saving Buildings}

Following the idea in \cite{Marks}, we consider the problem of optimizing the 
shape of a building of volume $V$ and height $h$ by minimizing building costs 
and yearly heating costs.  The plan of the building is defined by two curves 
$y_1(t)$ and $y_2(t)$, with $y_1(t)\geq y_2(t)$.  Here, in the plane $x,y$, the 
positive $y$ direction coincides with south.  The building is assumed to 
extend in the east-west direction from $-a$ to $a$, so that $-a\leq t\leq a$.

The construction cost can be represented by the following 
functional
$$
J_1(y_1,y_2)=\int_{-a}^a \left[ \alpha_1\sqrt{1+\dot y_1^2}+\alpha_2\sqrt{1+
  \dot y_2^2}+\alpha_3 \right] dt\, ,
$$
while the annual heating cost can be represented by
$$
J_2(y_1,y_2)=\int_{-a}^a \left[ \beta_1\sqrt{1+\dot y_1^2}+\beta_2\sqrt{1+
  \dot y_2^2}+\beta_3 - \beta_4\theta(\dot y_1)\sqrt{1+\dot y_1^2} \right] dt\, .
$$
For the detailed explanations of the above functionals and some technical 
motivations see \cite{Marks}.   
Here $\theta$ is the heat gain due to the solar radiation during the heating period, 
but it is chosen in a different way from \cite{Marks}, more precisely 
we choose $\theta(p)=\theta_1+(\theta_1-\theta_2)\frac{p}{\sqrt{1+p^2}}$, 
where $\theta_1,\theta_2$ are the average sums of the total solar radiation on the 
wall facing south ($\theta_1$) and on the walls facing east and west ($\theta_2$).

The curves must fulfill the boundary conditions $y_1(-a)=y_2(-a)=0$ and  $y_1(a)=y_2(a)=0$.  Moreover, there is a constraint because the volume must be $V$:
$$
\int_{-a}^a (y_1(t)-y_2(t))\, dt=\frac{V}{h}\, .
$$

If $J=(J_1,J_2)$, our problem is to minimize the functional $\overline J(y)$ on the set
$$
\mathcal{X}_3=\left\{ y\in W^{1,2}([-a,a];\RR^2)
    \mid y(\pm a)=(0,0),\ \int_{-a}^aG(y)\, dt=0\right\},
$$
where $G:\RR^2\to\RR$ is defined by $G(y_1,y_2)=y_1-y_2-\frac{V}{2ah}$.

The Lagrangian associated to the functional $J$ is $L:[-a,a]\times\RR^2\to\RR^2$ 
defined by
$$
L(t,p_1,p_2)= \left( \alpha_1\sqrt{1+p_1^2}+\alpha_2\sqrt{1+p_2^2}
  +\alpha_3, \beta_1'\sqrt{1+p_1^2}+\beta_2\sqrt{1+p_2^2}
  +\beta_3 - \beta_4'p_1\right),
$$
with $\beta_1'=\beta_1-\beta_4\theta_1$ and $\beta_4'=\beta_4(\theta_1-\theta_2)$.  
Let $C$ be $\RR^2_+$.  If $\zeta=(\zeta_1,1-\zeta_1)$ for $\zeta_1\in[0,1]$, 
the projection of the Lagrangian in the direction $\zeta$ is
$$
\begin{aligned}
L_\zeta(t,p_1,p_2)= &\zeta_1\left( \alpha_1\sqrt{1+p_1^2}+
    \alpha_2\sqrt{1+p_2^2}+\alpha_3\right) \\
  &+(1-\zeta_1) \left(\beta_1'\sqrt{1+p_1^2}+\beta_2\sqrt{1+p_2^2}
    +\beta_3 - \beta_4'p_1\right).
\end{aligned}
$$
In case there are solutions that are $C^2$, we consider the Euler-Lagrange 
equations:
$$
-\frac{d}{dt}D_pL_\zeta(t,\dot y)-\lambda\nabla G(y)=0
$$
and we obtain
\begin{equation}
\label{ddot}
\left\{ \begin{array}{l}
\displaystyle -[\zeta_1\alpha_1+(1-\zeta_1)\beta_1']\frac{\ddot y_1}{\sqrt{(1+
  \dot y_1^2)^3}} -\lambda=0 \\ \\
\displaystyle -[\zeta_1\alpha_2+(1-\zeta_1)\beta_2]\frac{\ddot y_2}{\sqrt{(1+
  \dot y_2^2)^3}}+\lambda=0
\end{array} \right.
\end{equation}
From these ordinary differential equations it is possible to see that if $\lambda$ is 
zero, the solutions $y_1,y_2$ would be straight lines, which is not an interesting 
solution to the problem proposed.  We look for solutions with $\lambda\neq 0$. 
For simplicity we write $A_{1,\zeta}=\zeta_1\alpha_1+(1-\zeta_1)\beta_1'$ and 
$A_{2,\zeta}=\zeta_1\alpha_2+(1-\zeta_1)\beta_2$.  
By integrating (\ref{ddot}), one obtains
$$
\left\{ \begin{array}{l}
\displaystyle A_{1,\zeta}\frac{\dot y_1(t)}{\sqrt{1+\dot y_1^2(t)}}
  =K_1-\lambda t \\ \\

\displaystyle A_{2,\zeta}\frac{\dot y_2(t)}{\sqrt{1+\dot y_2^2(t)}}
  =K_2+\lambda t
\end{array} \right.
$$
where $K_1,K_2$ are constants, and this gives
$$
\left\{ \begin{array}{l}
\displaystyle \dot y_1(t) = \frac{K_1-\lambda t}{\sqrt{A_{1,\zeta}^2-(K_1-
  \lambda t)^2}}\\ \\
\displaystyle \dot y_2(t) = \frac{K_2+\lambda t}{\sqrt{A_{2,\zeta}^2-(K_2+
  \lambda t)^2}}
\end{array} \right.
$$
A second integration, using the boundary condition in $-a$, gives
$$
\left\{ \begin{array}{l}
\displaystyle y_1(t) = \frac{1}{\lambda} \left( \sqrt{A_{1,\zeta}^2-(K_1-
  \lambda t)^2} -\sqrt{A_{1,\zeta}^2-(K_1+\lambda a)^2} \right) \\ \\
\displaystyle y_2(t) = -\frac{1}{\lambda} \left( \sqrt{A_{2,\zeta}^2-(K_2+
  \lambda t)^2} -\sqrt{A_{2,\zeta}^2-(K_2-\lambda a)^2} \right)
\end{array} \right.
$$
Using the boundary condition in $a$, we obtain $K_1=K_2=0$.  So the 
components of the solution $y_\zeta$ are
\begin{equation}
\label{yzeta_ex}
\left\{ \begin{array}{l}
\displaystyle y_1(t) = \frac{1}{\lambda} \left( \sqrt{A_{1,\zeta}^2-
  \lambda^2t^2} -\sqrt{A_{1,\zeta}^2-\lambda^2a^2} \right) \\ \\
\displaystyle y_2(t) = -\frac{1}{\lambda} \left( \sqrt{A_{2,\zeta}^2-
  \lambda^2t^2} -\sqrt{A_{2,\zeta}^2-\lambda^2a^2} \right)
\end{array} \right.
\end{equation}
See Figure~\ref{fig3} for an example of such a solution.
\begin{figure}
\begin{center}
\includegraphics[width=6cm]{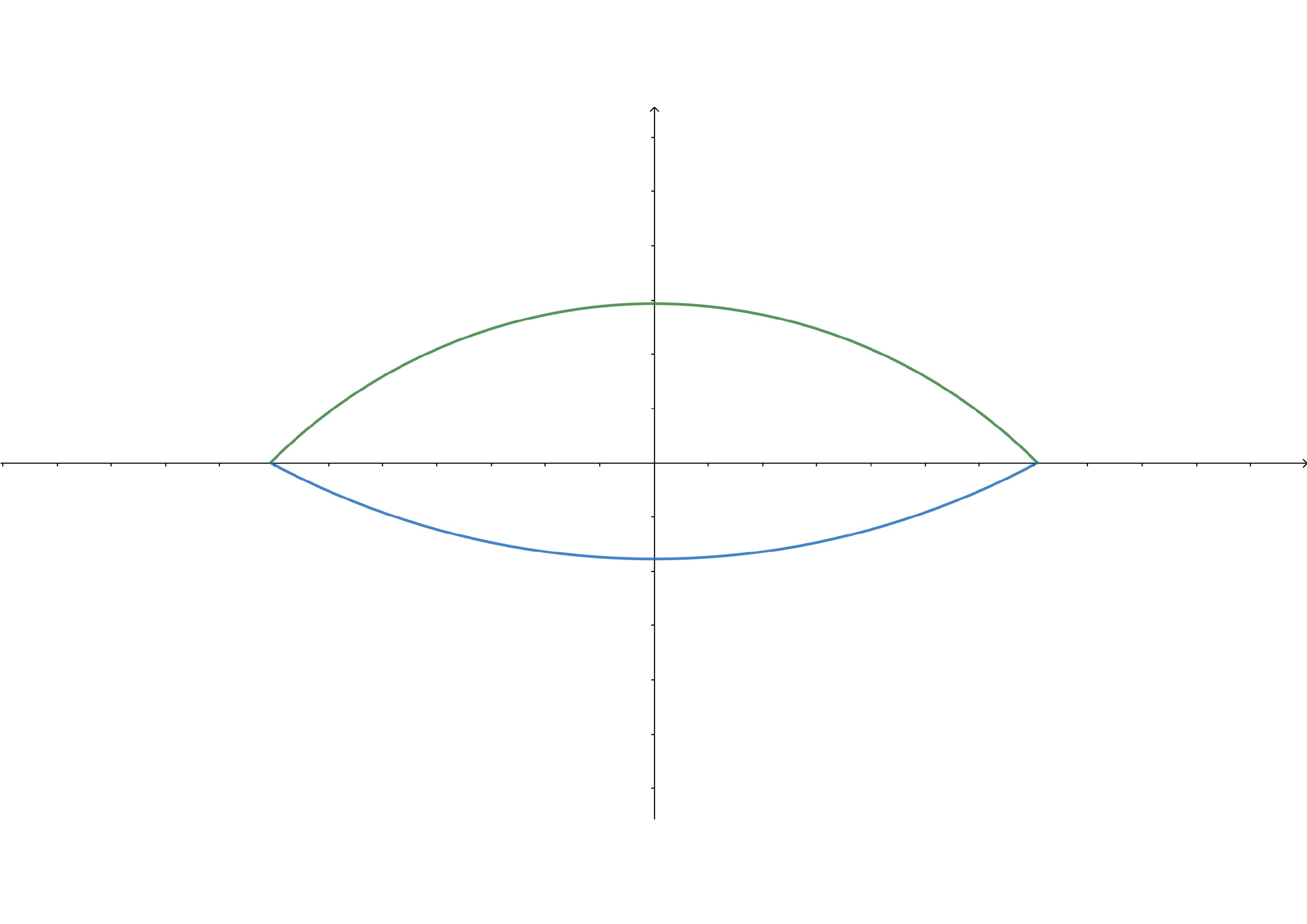}
\end{center}
\caption{The solution in \eqref{yzeta_ex} gives a building with this shape.}
\label{fig3}
\end{figure}
The constraint can be written
\begin{equation}
\label{constraint}
\begin{aligned}
\frac{1}{\lambda} &\left( \frac{A_{1,\zeta}^2}{\lambda}\arctan\frac{\lambda a}
    {\sqrt{A_{1,\zeta}^2-\lambda^2a^2}}-a\sqrt{A_{1,\zeta}^2-\lambda^2a^2}\right.\\  
  &\left.+\frac{A_{2,\zeta}^2}{\lambda}\arctan\frac{\lambda a}
    {\sqrt{A_{2,\zeta}^2-\lambda^2a^2}}-a\sqrt{A_{2,\zeta}^2-\lambda^2a^2} 
    \right) =\frac{V}{h}\, .
\end{aligned}
\end{equation}
Since the limit of the left-hand side of (\ref{constraint}) for $\lambda$ tending to 
zero is zero, a solution $\lambda>0$ exists for example if $A_{1,\zeta}<A_{2,\zeta}$ 
and
$$
\frac{a}{A_{1,\zeta}} \left( \frac{A_{1,\zeta} a\pi}{2}+
  \frac{A_{2,\zeta}^2a}{A_{1,\zeta}}\arctan
  \frac{A_{1,\zeta}}{\sqrt{A_{2,\zeta}^2-A_{1,\zeta}^2}}-
  a\sqrt{A_{2,\zeta}^2-A_{1,\zeta}^2} \right)\geq\frac{V}{h}\, .
$$

Reasoning as in \cite[section 3.5]{Troutman}, it is possible to conclude that the solution 
(\ref{yzeta_ex}) is the minimizer in the direction $\zeta$.  In fact, the Lagrangian 
$L_\zeta(t,p_1,p_2)-\lambda G(y_1,y_2)$ 
is convex in $(y_1,y_2,p_1,p_2)$ and strictly convex in $(p_1,p_2)$ for any 
$\lambda$.  Then the solution (\ref{yzeta_ex}) is the unique minimizer of class $C^1$ 
of the functional
$\int_{-a}^a L_\zeta(t,\dot y_1,\dot y_2)\, dt$ on $\mathcal{X}_3$.

Equation \eqref{set_weak_sol_eig_z} is in this case
$$
S_{\left(-\frac{d}{dt}D_pL_\zeta(t,y,\dot y)- \lambda \nabla G(y),
    \zeta\right)}(v)=H^+(\zeta)
$$
and we can propose the set of all the arcs $y_\zeta$ for $\zeta=(\zeta_1,1-\zeta_1)$, 
$\zeta_1\in[0,1]$, as a candidate infimizer.

\section{Conclusions}

A new approach to multi-criteria calculus of variations problems based on set 
optimization methods is proposed. In particular, the problem is extended to a 
problem with an objective function mapping into a complete lattice of sets. The 
corresponding solution concept is twofold: in addition to more traditional minimal 
solutions (also called efficient or non-dominated in multi-criteria/vector 
optimization), infimizer sets are searched for. Thus, optimality conditions are 
needed which characterize infimizers.

In this paper, conditions are presented which characterize infimizers consisting of arcs which in turn solve weighted sum scalarizations of the original problem and thus satisfy the known optimality conditions for scalarized problems. This works well in the convex case. The extension to the non-convex case is very desirable and could be addressed using nonlinear scalarizations of translative or oriented distance type (see for 
example \cite{Z_scal}, \cite{GJN_SetOpt} and \cite{HJNV_scal}).

\medskip
{\bf Acknowledgement.}

The authors are indebted to Andreas Hamel for fruitful suggestions and discussions.


\end{document}